\DeclareMathAlphabet{\pazocal}{OMS}{zplm}{m}{n}
\numberwithin{equation}{section}
\newtheorem{theorem}{Theorem}
\newtheorem{corollary}[theorem]{Corollary}
\newtheorem{lemma}[theorem]{Lemma}
\theoremstyle{definition}
\let\oldproofname=\proofname
\renewcommand{\proofname}{\rm\bf{\oldproofname}}
\def\imod#1{\allowbreak\mkern10mu({\operator@font mod}\,\,#1)}
\newcommand{\ignore}[1]{}
\newcommand{\es}{\emptyset}
\newcommand{\phii}{\varphi}
\title{Asymptotic size of covering arrays: an application of entropy 
compression}
\author{
Nevena Franceti\'{c} \\School of Mathematical Sciences\\
Monash University \\
Victoria 3800, Australia\\ 
{\it nevena.francetic@monash.edu} 
\and
Brett Stevens \\
School of Mathematics and Statistics\\
Careleton University \\
Ottawa, ON K1S 5B6 Canada \\
{\it brett@math.carleton.ca} \thanks{Research supported by NSERC grant 
249777.}}
\date{January 15, 2015.}
\begin{document}
\maketitle 

\begin{abstract}
 A covering array $CA(N; t,k,v)$ is an $N \times k$ array $A$ whose each cell 
takes a value for a $v$-set $V$ called an alphabet. Moreover, the 
set $V^t$ is contained in the set of rows of every $N \times t$ subarray of 
$A$. The parameter $N$ is called the size of an array and $CAN(t,k,v)$ denotes 
the smallest $N$ for which a $CA(N; t,k,v)$ exists. It is well known that 
$CAN(t,k,v) = {\rm \Theta}(\log_2 k)$~\cite{godbole_bounds_1996}. In this paper 
we derive two upper bounds on $d(t,v)=\limsup_{k \rightarrow \infty} 
\frac{CAN(t,k,v)}{\log_2 k}$ using the algorithmic approach to the Lov\'{a}sz 
local lemma also known as entropy compression.    
\end{abstract}

\section{Introduction}

A covering array $CA(N;t,k,v)$ is an $N \times k$ array $A$ whose cells take 
values from a set $V$ of size $v$ and the set of rows of every $N \times t$ 
subarray of $A$ contains the whole set $V^t$. The parameter $t$ is 
called the strength, the parameter $v$ is the alphabet size and $N$ is called 
the size of the array. A covering array with given parameters $t$, $k$ and $v$ 
always exists. The two central questions regarding covering arrays are: what the 
smallest number of rows is, denoted by $CAN(t,k,v)$, for which a covering 
array with the given set of parameters $(t,k,v)$ exists, and how an array of such size can  
be constructed. In this paper we study the upper bounds on the 
asymptotic size of covering arrays. It is easy to see that if $t=1$ or $v=1$, 
covering arrays are trivial. Hence we assume that $t \geq 2$ and $v\geq 2$. 

Covering arrays are best known for their applications in the software 
testing industry~\cites{kuhn_combinatorial_testing, 
survey_combinatorial_testing} as interaction testing plans. There are numerous
software tools for construction of covering arrays~\cite{www_pairwise}, and 
 there is a vast literature on them as well~\cites{hartman_ca_survey, 
colbourn_ca_survey, binary_cas_survey, kuhn_combinatorial_testing}. However, the 
central question about the optimal size is far from fully answered. The only 
infinite family of covering arrays whose exact size is known is the first 
non-trivial family of arrays of strength $t=2$ and with alphabet size 
$v=2$~\cites{katona, kleitman}. The best known upper bound on the size of a 
covering array for any set of parameters $(t,k,v)$ is obtained by an 
application of the Lov\'{a}sz local lemma~\cite{gargano93}. Together these two 
results give us the asymptotic size of covering  arrays when strength $t$ and 
alphabet size $v$ are fixed and the number of columns $k$ is varied. 

\begin{theorem}~\cites{katona, kleitman, godbole_bounds_1996}\label{thm: log 
growth}
Let $t, v \geq 2$ be integers. Then,
 \[
  CAN(t,k,v) = \rm{\Theta}(\log_2 k).  
 \]
\end{theorem}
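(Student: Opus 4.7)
The plan is to establish $CAN(t,k,v) = \Theta(\log_2 k)$ for fixed $t, v \geq 2$ by proving matching upper and lower bounds of order $\log_2 k$.

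First I would handle the lower bound by a column-distinctness argument. If two columns of a covering array agreed in every row, the $N \times 2$ subarray they index would take at most $v$ distinct row values and so could not contain all $v^2$ pairs from $V^2$, contradicting strength $t \geq 2$. Hence the $k$ columns are distinct vectors in $V^N$, which forces $k \leq v^N$ and therefore
\[
N \;\geq\; \frac{\log_2 k}{\log_2 v} \;=\; \Omega(\log_2 k).
\]
This already recovers the Katona--Kleitman lower bound up to constants for arbitrary $t$ and $v$; the exact values in their theorem for $t=v=2$ are not needed for the asymptotic statement.

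Next I would prove the upper bound by the probabilistic method. Fix $N$ to be chosen and fill an $N \times k$ array with entries drawn independently and uniformly from $V$. For each $t$-subset $C \subseteq [k]$ and each target tuple $\mathbf{x} \in V^t$, let $B_{C,\mathbf{x}}$ be the bad event that $\mathbf{x}$ does not occur as a row of the $C$-indexed subarray. Row-independence gives $\Pr[B_{C,\mathbf{x}}] = (1 - v^{-t})^N$, and there are exactly $\binom{k}{t} v^t$ bad events. By the union bound, a covering array exists whenever $\binom{k}{t} v^t (1 - v^{-t})^N < 1$; using $-\ln(1-x) \geq x$ this is ensured by choosing
\[
N \;=\; \bigl\lceil v^t \ln\bigl( \tbinom{k}{t} v^t \bigr) \bigr\rceil,
\]
which is $O(\log_2 k)$ once $t$ and $v$ are fixed. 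Combining the two bounds gives $CAN(t,k,v) = \Theta(\log_2 k)$.

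No individual step here is a serious obstacle; the argument is essentially a union bound plus a one-line counting argument. The real point is what the estimate does \emph{not} do: the union bound costs a factor of roughly $v^t$ in the leading coefficient, and even the Lov\'asz local lemma improvement of \cite{gargano93} only saves a constant. Extracting the correct value of the multiplicative constant $d(t,v) = \limsup_{k \to \infty} CAN(t,k,v)/\log_2 k$ is the genuinely hard question, and it is precisely this constant that the entropy-compression approach of the remainder of the paper is designed to sharpen.
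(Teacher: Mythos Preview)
Your argument is correct. Note, however, that the paper does not supply its own proof of this theorem: it is quoted as a known result, with the lower bound attributed to \cite{katona,kleitman} and the upper bound to \cite{godbole_bounds_1996}. So there is no in-paper proof to match against line by line.

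That said, it is worth flagging the one small difference between your upper-bound route and the cited one. The bound of Godbole~et~al.\ (stated here as Theorem~\ref{thm: old LLL}) is obtained via the Lov\'asz local lemma rather than the plain union bound: the dependency of the bad events $B_{C,\mathbf{x}}$ is limited to those events sharing a column with $C$, which replaces your factor $\binom{k}{t}$ by roughly $t\binom{k-1}{t-1}$ and hence shaves the leading constant down to $(t-1)/\log_2\bigl(\tfrac{v^t}{v^t-1}\bigr)$ instead of your $t/\log_2\bigl(\tfrac{v^t}{v^t-1}\bigr)$. For the $\Theta(\log_2 k)$ statement this distinction is immaterial, and your union-bound argument is the cleaner way to establish the asymptotic order. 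Your lower bound is likewise the standard one; the only implicit step is that a strength-$t$ array is automatically strength~$2$ (project any $t$-subarray onto two of its columns), which justifies invoking the $V^2$-covering property on a pair of columns.
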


Given the previous theorem, there is significant interest in determining the 
following two values (we use the notation given in~\cite{binary_cas_survey}):
\[
\begin{array}{lcr}
 c(t,v) = \liminf_{k \rightarrow \infty} \frac{CAN(t,k,v)}{\log_2 k} & 
\hbox{   and   } &
 d(t,v) = \limsup_{k \rightarrow \infty} \frac{CAN(t,k,v)}{\log_2 k}. \\
 \end{array}
\]

The exact value of $d(t,v)$ is only known when $t=2$. 

\begin{theorem}~\cite{gargano93}\label{thm: gargano d_2_v}
Let $v \geq 2$ be an integer. Then $ d(2,v) = \frac{v}{2}.$ 
\end{theorem}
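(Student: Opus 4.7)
The plan is to match explicit upper and lower bounds on $CAN(2,k,v)$, each of the form $(v/2+o(1))\log_2 k$. First, I would recast a $CA(N;2,k,v)$ as a family of $k$ ordered $v$-partitions $\mathcal{P}_1,\dots,\mathcal{P}_k$ of $[N]$, where $\mathcal{P}_i$ is the partition induced by column $i$ on the rows, and the covering condition is that every two partitions be \emph{qualitatively $2$-independent}: for $i\ne j$ and every pair of classes $A\in\mathcal{P}_i$, $B\in\mathcal{P}_j$, the intersection $A\cap B$ is non-empty.

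For the \textbf{upper bound} $d(2,v)\le v/2$, note that the naive i.i.d.\ random construction only gives $d(2,v)=O(v^2)$, so a structured construction is needed. When $v$ is a prime power I would begin with an orthogonal array $OA(v^2;2,v+1,v)$ (obtained from $v-1$ mutually orthogonal Latin squares) and apply a Roux-type product/blowup or a perfect-hash-family glueing construction that multiplies the number of columns by roughly a factor of $v$ at the cost of roughly $v/2$ additional rows. Iterating produces arrays of size $N=(v/2+o(1))\log_2 k$; rounding $v$ up to the next prime power and merging symbols contributes only lower-order terms for general $v$.

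For the \textbf{lower bound} $d(2,v)\ge v/2$, I would use the graph-entropy method of K\"{o}rner. Form the conflict graph $G_N$ on vertex set $[v]^N$ whose edges join pairs of putative columns that violate qualitative $2$-independence; the $k$ columns of any $CA(N;2,k,v)$ then form an independent set in $G_N$. By estimating the graph entropy of $G_N$ (equivalently, applying a Shannon-type inequality to the jointly distributed row values $(Y_1,\dots,Y_k)\in[v]^k$ obtained by evaluating all columns at a uniformly random row index of $[N]$), one bounds the independence number of $G_N$ by $2^{(2N/v)(1+o(1))}$, so $k\le 2^{(2N/v)(1+o(1))}$, i.e.\ $N\ge(v/2)\log_2 k\,(1-o(1))$.

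The principal obstacle is the entropy calculation for the lower bound: obtaining the sharp constant $2/v$ requires exploiting the geometric structure of the forbidden pairs of columns rather than merely counting them (the latter yields only a $1/v^2$-type constant). The matching algebraic construction is more routine once the correct seed object, an orthogonal array of strength $2$ over $v$ symbols, is in hand; together with the lower bound this pins down $d(2,v)=v/2$.
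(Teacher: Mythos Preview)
The paper does not contain a proof of this theorem; it is quoted from \cite{gargano93} as background, so there is no ``paper's own proof'' to compare against. That said, a few remarks on your sketch are in order.

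Your lower-bound plan is essentially the correct one: recasting columns as qualitatively $2$-independent $v$-partitions and bounding the number of such partitions via K\"orner's graph-entropy machinery is exactly the route taken in the cited reference, and it does produce the sharp constant $2/v$ in the exponent.

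The upper-bound plan, however, has a genuine gap. A Roux-type or perfect-hash-family product starting from an orthogonal array does not, by itself, reach the constant $v/2$; the standard recursive products for strength $2$ give quadratic growth in $v$ (compare the $\binom{v}{2}$ line in Table~\ref{table: t=2 comparison} of this paper). More concretely, your own arithmetic is inconsistent with the target: if a single step sends $k\mapsto vk$ and $N\mapsto N+v/2$, then after $m$ iterations $k\approx v^{m}$ and $N\approx N_0+mv/2$, so
\[
\frac{N}{\log_2 k}\;\longrightarrow\;\frac{v}{2\log_2 v}\;<\;\frac{v}{2}\quad(v\ge 3),
\]
which would contradict the lower bound you just argued. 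So either the claimed recursion does not exist with those parameters, or the parameters are misstated. The actual Gargano--K\"orner--Vaccaro upper bound (resolving R\'enyi's 1971 problem) is not an explicit OA-based recursion at all; it is a density/greedy argument over balanced sequences exploiting Sperner-type antichain estimates, and obtaining the exact constant $v/2$ there is delicate. Your outline would need to replace the product step with that argument, or at least cite it, before the upper half of the proof goes through.
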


However, covering arrays which meet this asymptotic size are hard to construct. 
The only family which we currently know how to construct which attains this 
size is the already mentioned family of $CA$s with $t=2$ and 
$v=2$~\cites{katona, kleitman}.

In 1996, Godbole~et.~al.~\cite{godbole_bounds_1996} gave an upper 
bound on $d(t,v)$ for any strength $t \geq 2$. 

\begin{theorem}~\cite{godbole_bounds_1996}\label{thm: old LLL}
Let $t \geq 2$ and $v$ be positive integers. Then,
 \[d(t,v) \leq \frac{(t-1)}{\log_2 \frac{v^t}{v^t-1}}. \]
\end{theorem}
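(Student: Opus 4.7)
The plan is a standard symmetric Lov\'asz Local Lemma argument applied to a uniformly random $N\times k$ array over $V$. First I would define, for each $t$-subset $C$ of the columns and each tuple $\tau\in V^t$, the bad event $A_{C,\tau}$ that $\tau$ fails to appear as a row in the $N\times t$ subarray indexed by $C$. Since the rows are independent and each row of that subarray equals $\tau$ with probability $v^{-t}$, we get
\[
\Pr[A_{C,\tau}] = (1-v^{-t})^N =: p.
\]
The array is a covering array precisely when none of the $A_{C,\tau}$ occurs.

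Next I would bound the dependency degree. Two events $A_{C,\tau}$ and $A_{C',\tau'}$ are mutually independent whenever $C\cap C'=\emptyset$, since they depend on disjoint sets of cells. The number of $t$-subsets $C'$ meeting a fixed $C$ is at most $t\binom{k-1}{t-1}$, and for each such $C'$ there are $v^t$ choices of $\tau'$, so each event has at most
\[
D \;\leq\; v^t\, t\binom{k-1}{t-1} \;=\; O(k^{t-1})
\]
neighbours in the dependency graph. The symmetric LLL then guarantees a covering array exists as soon as $ep(D+1)\leq 1$.

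Solving this inequality for $N$, I would take logarithms base $2$:
\[
N\,\log_2\!\frac{v^t}{v^t-1} \;\geq\; \log_2 e + \log_2(D+1),
\]
so it suffices to take
\[
N \;=\; \Bigl\lceil\frac{\log_2 e + \log_2(D+1)}{\log_2\frac{v^t}{v^t-1}}\Bigr\rceil.
\]
Since $\log_2(D+1)=(t-1)\log_2 k + O(1)$ with the implied constant depending only on $t$ and $v$, dividing by $\log_2 k$ and letting $k\to\infty$ gives
\[
d(t,v) \;\leq\; \limsup_{k\to\infty}\frac{N}{\log_2 k} \;\leq\; \frac{t-1}{\log_2\frac{v^t}{v^t-1}},
\]
as required.

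There is no real obstacle here; the only care needed is to verify that the $O(1)$ lower-order terms arising from $\log_2 e$, the combinatorial factor $v^t t/(t-1)!$, and the ceiling do not affect the limsup, which they do not once divided by $\log_2 k$. The argument is entirely a routine symmetric LLL computation and does not yet use the entropy compression machinery that the rest of the paper develops to sharpen this bound.
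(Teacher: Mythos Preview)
Your argument is correct and is precisely the symmetric Lov\'asz Local Lemma computation from~\cite{godbole_bounds_1996} that the theorem is citing; the paper itself does not give a proof of this statement but merely quotes it as the prior benchmark. The only additional remark worth making is that the paper notes in passing (Section~\ref{sec: balanced arrays}) that its entropy-compression Lemma~\ref{lemma: CA existence} with the unrestricted input set $\mathcal{I}=V^N$ also reproduces essentially this same bound, so your LLL derivation and the paper's machinery agree on this baseline before the balanced-column refinement is introduced.
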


Recently, the method of entropy compression 
was successfully used in the context of 
vertex-colourings of graphs~\cites{vida_nonrepetitive_colouring, 
entropy_compression_2014} to improve on the previous results which used the 
local lemma. In this paper we explore an application of this method in 
the context of covering arrays. We give a new upper bound on $d(t,v)$ for any 
$t \geq 3$ in Theorem~\ref{thm: d_t_v for any t}, which improves 
Theorem~\ref{thm: old LLL}. We also obtain a tighter upper bound on $d(t,v)$ 
given in Lemma~\ref{lemma: multivariable upper bound on d} which depends on 
further computational approximations. Table~\ref{table: constants} displays our 
new upper bounds on $d(t,v)$ for $2 \leq t \leq 6$ and $2 \leq v \leq 10$.  
Finally, we analyze these results and point out possible challenges and further 
avenues for improvement.

\section{Algorithm}\label{sec: algorithm}

We adapt the algorithms given in~\cites{vida_nonrepetitive_colouring, 
entropy_compression_2014} to covering arrays. The algorithm is used as a 
tool for counting. The main idea is to keep a record of  
execution for the algorithm. This allows us to match an input sequence to 
the algorithm injectively with a pair consisting of the 
output array and the record of the execution.  For a given input, we say that 
the execution was 
\emph{unsuccessful} and that it produced a \emph{bad} output  
if, the output array is only partially filled and has 
some empty columns. 
If   the total number of possible input sequences is greater than the 
total number of bad output pairs, then 
there 
must exist an input sequence for which the algorithm successfully 
terminates. Before we give the algorithm, we need to introduce some notation 
which is required for the analysis.  

Given array parameters $N, t, k$ 
and $v$, the algorithm attempts to construct a covering array of size $N 
\times k$ one column at a time. A column of a $CA(N; t,k,v)$ is an element of 
$V^N$, where $V$ is the alphabet set of size $v$. To remind us that these 
ordered $N$-tuples are columns, we denote elements of $V^N$ by $c$. Let 
$\mathcal{I} \subseteq V^N$ denote a set of all  admissible input 
columns for the algorithm. We will define $\mathcal{I}$ in Section~\ref{sec: 
balanced arrays}. Then the algorithm receives as an input value  $I 
\in \mathcal{I}^\ell$, a sequence of 
$\ell$ columns, where $\ell$ is the number of iterations to be performed. 
Let $I(j)$ denote the $j^{\rm{th}}$ coordinate of $I$. 

At an intermediate step in the algorithm, some columns of the array may still 
be empty. Let $\es$ denote an empty column, and let $\mathcal{I}' = \mathcal{I} 
\cup \{ \es \}$. Then, an array can be represented 
as a sequence of $k$ elements of $\mathcal{I}'$, i.e. $A = (c_1, c_2, \dots, 
c_k)$, where $c_i \in \mathcal{I}'$. Let $A(i) = c_i$ be the values in the 
$i^{\rm{th}}$ column of the array. We also require a way to choose which column 
to fill at each step. Define $\phii(A)$ to be the priority function on the 
empty columns of $A$: 
\[
 \phii(A) = \left\{\begin{array}{ll}
		    -1, & \mbox{ if there is no empty column in } A, \\
                    \min\{i \, : \, A(i) = \es \},  &  \rm{otherwise.} \\
                   \end{array}
 \right.
\]
The key property of a covering array is that the subarray on any $t$ columns 
 contains each $t$-tuple in $V^t$ at least once. Let 
$\mathcal{T}$ be the set of all 
$t$-subsets of 
the set $[1,k] = \{1,2,\dots, k\}$. Let $\tau = \{i_1, i_2, \dots, i_t\} \in 
\mathcal{T}$, where 
$i_1 < i_2 < \cdots < i_t$. Then denote by $A|_{\tau}= (c_{i_1}, c_{i_2}, 
\dots c_{i_t})$ the subarray of $A$ on columns indexed by $\tau$. An auxiliary 
function $\mathtt{is\, a \, covering}(A|_{\tau})$ returns true if the set of 
rows of 
$A|_{\tau}$ contains $V^t$ as a subset, and false otherwise.

Now we are ready to describe the algorithm which attempts to construct a 
$CA(N; t,k,v)$ for some positive integers $N$, $t$, $k$ and $v$. It 
starts by initializing all columns of an $N \times k$ array $A$ to be empty, 
and opens a new record file $R$. Then it runs for $\ell$ iterations where 
$\ell$ is the length of the input sequence. The partially constructed array $A$ 
satisfies the covering property at the beginning of each iteration.  
At a step $j$, let $i$ be 
smallest index of an empty column of $A$. The algorithm assigns to the 
$i^{{\rm th}}$ column the $j^{\rm{th}}$ element of the input 
sequence. Now, if $A$ has an $N \times t$ subarray on columns $\tau \subset 
[1,k]$,  which is not a covering, then $i \in \tau$ since $A$ met the covering 
property before algorithm entered the $j^{\rm{th}}$ iteration.
The algorithm 
records $\hat{\tau} = \tau \setminus \{i\}$ and  the content of the 
subarray of $A$ on columns in $\tau$. Note, in order to be able to recover 
input from the output, we need to know the relative position of $i$ with 
respect to other elements in $\hat{\tau}$ since $i$ is not recorded. Hence the 
elements of $\tau$ are first sorted in increasing order. Finally, since 
this subarray does not have the covering property, we assign empty values to 
the 
columns in $\tau$. Otherwise, the addition of a new column to $A$ preserves the 
covering property and the algorithm completes this iteration after recording a 
successful entry to the file.  

Note that the number of lines in the record file $R$ is equal to the number of 
executed iterations. If the algorithm 
completes and the array $A$ has no empty columns, then it is easy to see that 
$A$ 
satisfies the covering property on every set of $t$-columns, i.e. it is a 
covering array. Otherwise, $A$ is 
only partially constructed, it has some empty columns, and we say that the 
execution of the algorithm on the given input was unsuccessful.

\begin{algorithm}[H]
 \KwData{$I \in  \mathcal{I}^{\ell}$ where $\mathcal{I} \subset 
V^N$}
 \KwResult{a (partial) covering array $CA(N;t,k,v)$}
 $A := (\es, \es, \dots, \es)$ \\
 $R:=$ new file()\\
 \For{$j := $ 1 to $\ell$}{
    $i := \phii(A)$ \\
    \If{$i==-1$}{
      break \\
    }\Else{
      $A(i):=I(j)$ \\
      $good :=$ true \\
      \For{all $\tau \in \mathcal{T}$}{
	\If{$i \in \tau$ and $A|_{\tau}$ has no empty columns}{
	  $good := \mathtt{is \, a \, covering}(A|_{\tau})$ \\
	  \If{$good ==$ false}{
	      \begin{tabular}{ll}
	      // omit $i$ from  ${\tau}$ & \\
	      $\mathtt{sort}(\tau)$  & // so that $\tau(r_1)<\tau(r_2)$ 
when $r_1<r_2$ \\
	      $(i_1, i_2, \dots, i_t) := \tau$ & \\
	      $ h := $ index of $i$ in $\tau$ & // i.e. $i_h = i$ \\
	      $\hat{\tau} := (i_1, i_2, \dots, i_{h-1}, i_{h+1}, \dots, i_t)$ & 
// i.e. $\hat{\tau} = \tau \setminus \{i\}$ \\ & \\
	      \end{tabular} \\
	      // record the content of $A|_{\tau}$ and delete these columns 
 \\
	      $(c_1, c_2, \dots, c_t) := (A(i_1), A(i_2), \dots, A(i_t))$  \\
	      $A|_{\tau} := (\es, \es, \dots, \es)$  \\ $\,$ \\
	    
	      $R.\mathtt{write}$(`back-track -- in columns:', $\hat{\tau}$, ` 
deleted content: ', $(c_1, c_2, \dots, c_t)$, `\textbackslash n' )  \\
	      
	    {\bf break} // break the loop over $\tau \in \mathcal{T}$ \\
	}
      }
    }
    \If{$good ==$ true} 
    {
      $R.\mathtt{write}$(`successful entry \textbackslash n') \\
    }
  }  
 }
 \Return{$(A, R)$}

 \caption{Entropy compression algorithm for construction of a 
$CA(N;t,k,v)$} \label{alg:CA2}
\end{algorithm}

\section{Reversibility}\label{sec: bijection}

Next, we establish bijection between the set of all possible inputs 
$\mathcal{I}^\ell$ and the set of all possible outputs $\mathcal{O}_\ell = 
\{(A,R) \, : \, \mbox{obtained by the algorithm on an input } I \in 
\mathcal{I}^\ell\}$. It is easy 
to see that for an input sequence $I \in \mathcal{I}^{\ell}$, we get only one 
output $(A, R)$. We prove the converse in several steps. Let $A_j$ 
denote the state of the array $A$ at the beginning of the 
$j^{\rm{th}}$ iteration of Algorithm~\ref{alg:CA2}. Hence, $A_1$ is an empty 
array, and $A_{\ell+1}=A$, the array returned by the algorithm.

\begin{lemma}\label{lemma: column at step j} Given $(A,R) \in \mathcal{O}_\ell$, 
we can determine the set of indices of all columns 
which are empty in $A_j$ for all $j \in \{1,2, \dots, \ell\}$.
\end{lemma}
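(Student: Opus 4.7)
The plan is to establish this by forward induction on $j$, tracking the set $E_j := \{i : A_j(i) = \es\}$ of indices of empty columns of $A_j$ using only the record $R$ (the value of $k$ is known from the dimensions of the returned array $A$). The main observation underpinning everything is that the priority function $\phii$ depends solely on the set of empty columns of its argument, not on the contents of the filled ones, so the empty-column set can be propagated forward without knowledge of the column values stored along the way.

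For the base case, $A_1$ is initialized by the algorithm to $(\es,\es,\dots,\es)$, so $E_1 = \{1,2,\dots,k\}$. For the inductive step, assume $E_j$ is known. Then $i := \phii(A_j) = \min E_j$ (with the convention that $i = -1$ when $E_j = \es$, in which case the algorithm breaks and $A_{j+1} = A_j$, so $E_{j+1}=E_j$). Read the $j$-th line of $R$. If it says \emph{successful entry}, then column $i$ was filled in iteration $j$ and no emptying occurred, so $E_{j+1} = E_j \setminus \{i\}$. If it says \emph{back-track} and records the tuple $\hat\tau$, then during iteration $j$ column $i$ was filled and then columns in $\tau = \hat\tau \cup \{i\}$ were all reset to $\es$; since $i \in E_j$ already, the net effect is $E_{j+1} = E_j \cup \hat\tau$. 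In either branch $E_{j+1}$ is a deterministic function of $E_j$ and the $j$-th line of $R$, completing the induction.

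There is no serious obstacle here; the only mild subtlety is that the record omits $i$ in the back-track case (only $\hat\tau$ is written), but this is harmless because $i = \min E_j$ is recoverable from the inductive hypothesis, so the full index set $\tau = \hat\tau \cup \{i\}$ can be reconstructed on the fly. In subsequent steps of the reversibility argument one will also want to reconstruct the \emph{contents} of $A_j$, not just $E_j$; this lemma isolates the purely combinatorial part of that reconstruction and will be used to drive the full inversion by letting later arguments consult $\phii(A_j)$ and the columns affected by each back-track without having to track them independently.
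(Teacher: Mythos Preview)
Your proof is correct and follows essentially the same approach as the paper: forward induction on $j$, with base case $E_1=[1,k]$ and inductive step updating $E_{j+1}$ to $E_j\setminus\{i\}$ or $E_j\cup\hat\tau$ depending on whether line $j$ of $R$ is a successful entry or a back-track. Your additional remarks about the $i=-1$ case and the recoverability of $i$ from $\min E_j$ are helpful elaborations but do not change the underlying argument.
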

\begin{proof}
We use induction on $j$. Denote by $\mathcal{E}_j$ the set of all 
indices of columns which are empty in $A_j$. When $j=1$, $\mathcal{E}_1 = 
[1,k]$, since the algorithm starts with an empty array $A_1$. 

Assume that we know $\mathcal{E}_j$ for some $j<\ell$. Then, $i= \min 
\mathcal{E}_j = \phii(A_j)$ is the index of a column which receives a value 
in the $j^{\rm{th}}$ iteration. If the $j^{\rm{th}}$ line of $R$ starts with 
`successful entry', then $\mathcal{E}_{j+1} = \mathcal{E}_j \setminus \{i\}$. 
Otherwise, the $j^{\rm{th}}$ line of $R$ contains $\hat{\tau} = \tau \setminus 
\{i\}$, where $\tau$ is the set of columns whose content is removed 
at step $j$. Hence, $\mathcal{E}_{j+1} = \mathcal{E}_j \cup \hat{\tau}$. 
\end{proof}

The following is an immediate corollary.
\begin{corollary}\label{cor: phii A_j}
We can determine $\phii(A_j)$ for all $j \in [1,\ell]$ from an output of the 
algorithm $(A,R) \in \mathcal{O}_\ell$ .
\end{corollary}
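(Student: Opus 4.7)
The plan is to leverage Lemma~\ref{lemma: column at step j} directly, since the priority function $\phii$ is by definition determined entirely by the set of empty columns of its argument. Specifically, recall that $\phii(A_j) = -1$ if $A_j$ has no empty columns, and otherwise $\phii(A_j) = \min\{i : A_j(i) = \es\}$; in our notation from the lemma, this is just $\min \mathcal{E}_j$ when $\mathcal{E}_j \neq \es$, and $-1$ when $\mathcal{E}_j = \es$.

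Hence, given an output $(A,R) \in \mathcal{O}_\ell$, I would first invoke Lemma~\ref{lemma: column at step j} to recover the sets $\mathcal{E}_j$ for each $j \in [1,\ell]$. From these sets the value $\phii(A_j)$ is immediately read off: either $\mathcal{E}_j$ is empty, in which case $\phii(A_j)=-1$, or it is nonempty, in which case $\phii(A_j)$ equals its minimum element. No induction or additional combinatorial argument is needed beyond what the lemma already supplies.

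Since the corollary contains no real obstacle, the only thing to be careful about is simply pointing out that the definition of $\phii$ depends solely on the positions of the empty cells of the array it is applied to, not on any of the filled values. Thus the proof reduces to a one-line application of the lemma.
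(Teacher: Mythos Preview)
Your proposal is correct and matches the paper's approach exactly: the paper states the corollary as an immediate consequence of Lemma~\ref{lemma: column at step j} without further argument, and your observation that $\phii(A_j)$ is determined by $\mathcal{E}_j$ (as $\min \mathcal{E}_j$ when nonempty, $-1$ otherwise) is precisely the intended reasoning.
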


Next, we determine  $A_j$ at each step of the algorithm from the 
output values.

\begin{lemma}\label{lemma: A_j}
Given $(A,R) \in \mathcal{O}_\ell$, we can deduce $A_j$ for all $j 
\in [1, \ell+1]$.
\end{lemma}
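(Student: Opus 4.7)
The plan is to reverse the algorithm one iteration at a time, using backward induction on $j$ from $\ell+1$ down to $1$. The base case is free: by definition $A_{\ell+1} = A$, which is part of the output.

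For the inductive step, assume $A_{j+1}$ has been recovered. By Corollary~\ref{cor: phii A_j} we know $i := \phii(A_j)$, which is exactly the index of the column that the algorithm assigned a value to during iteration $j$. I would then read the $j^{\rm{th}}$ line of $R$ and split into the two cases produced by the algorithm. If the line is \emph{successful entry}, then during iteration $j$ the algorithm only wrote into column $i$ and left every other column untouched; hence $A_j$ agrees with $A_{j+1}$ on all columns except column $i$, where $A_j(i) = \es$.

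If the line is a \emph{back-track} line, it contains $\hat\tau$ and the deleted content $(c_1, c_2, \ldots, c_t)$. To recover $\tau$, I would insert $i$ into $\hat\tau$ in its unique sorted position $h$, giving $\tau = (i_1, \ldots, i_t)$ with $i_h = i$. Since the algorithm's only write during this iteration was $A(i) := I(j)$, the values in the other columns of $\tau$ were inherited from $A_j$; thus in $A_j$ we have $A_j(i_r) = c_r$ for every $r \neq h$, and $A_j(i) = \es$ (column $i$ was empty at the start of iteration $j$ by choice of $i$). All columns outside $\tau$ were not modified during iteration $j$, so they agree with $A_{j+1}$. This uniquely determines $A_j$.

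The argument is not deep; its main subtlety is the bookkeeping around the sorted ordering of $\tau$. Because $\hat\tau$ is recorded as the sorted $(t-1)$-tuple $\tau \setminus \{i\}$ rather than as an arbitrary set, we must correctly reinsert $i$ at position $h$ so that the $h^{\rm{th}}$ coordinate of the recorded content is identified with the (now re-emptied) column $i$ and the remaining coordinates align with $\hat\tau$. This is precisely the reason the algorithm performs $\mathtt{sort}(\tau)$ before writing to $R$, and it is the only point in the proof where care is genuinely required.
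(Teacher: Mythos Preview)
Your proof is correct and follows essentially the same backward-induction argument as the paper's own proof. You are in fact slightly more careful in the back-track case, correctly noting that $A_j(i) = \es$ rather than $c_h$; the paper's proof restores all $t$ columns of $\tau$ from the recorded content, which is a minor slip since column $i$ was empty at the start of iteration $j$.
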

\begin{proof}
The proof is by reverse induction. When $j=\ell+1$, $A_j = A$, the output of 
the 
algorithm. Assume that we know $A_{j+1}$ for some $j < \ell$. By 
Corollary~\ref{cor: phii A_j}, we know $i = \phii(A_j)$. We have two cases 
to consider.  If the $j^{\rm{th}}$ line of $R$ starts with `successful entry', 
then $A_j$ is obtained by deleting the content of column $i$ in $A_{j+1}$. 
Otherwise, the $j^{\rm{th}}$ line of $R$ contains $\hat{\tau}$,  indices of 
all but one of the columns whose content is deleted at step $j$ of the 
algorithm. It also has the content of all $t$ of these columns, 
$(c_{i_1}, c_{i_2}, \dots, c_{i_t}) \in V^t$, where $\hat{\tau} \cup \{i\} = 
\{i_1, i_2, \dots, i_t \}$ such that $i_{r_1} < i_{r_2}$ when $r_1 < r_2$. Then 
$A_j$ 
is obtained from $A_{j+1}$ after the following assignment: $A_{j+1}(i_r) = 
c_{i_r}$ for all $r \in [1,t]$.
\end{proof}

Finally, we are ready to prove the reversibility: given an output, we can 
obtain the unique input sequence for the algorithm.
\begin{lemma}\label{lemma: unique input}
Given $(A,R) \in \mathcal{O}_\ell$, there is a unique input sequence
$I \in \mathcal{I}^\ell$, such that Algorithm~\ref{alg:CA2} produces $(A,R)$ on 
input $I$. 
\end{lemma}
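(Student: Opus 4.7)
The plan is to use the preceding two lemmas to reconstruct the input coordinate-by-coordinate. Since Lemma~\ref{lemma: A_j} and Corollary~\ref{cor: phii A_j} already tell us both $A_j$ and $i_j := \phii(A_j)$ for every iteration index $j \in [1,\ell]$, the only thing left to recover is the value $I(j) \in \mathcal{I}$ that the algorithm actually consumed at step $j$. The idea is to read this value directly off of either $A_{j+1}$ or the $j^{\rm{th}}$ line of $R$, depending on whether step $j$ was successful or triggered a back-track.

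I would split the proof into the two cases matching the structure of the algorithm. In the successful case, the $j^{\rm{th}}$ line of $R$ starts with \emph{successful entry}, and column $i_j$ of $A_{j+1}$ is precisely the value just written, i.e. $I(j) = A_{j+1}(i_j)$; by Lemma~\ref{lemma: A_j} this column is known. In the back-track case, the $j^{\rm{th}}$ line of $R$ contains the sorted tuple $\hat{\tau}$ and the full content $(c_1,c_2,\dots,c_t)$ of the deleted subarray $A|_\tau$ at the moment of deletion. Since we know $i_j$ from Corollary~\ref{cor: phii A_j} and $\hat{\tau}$ from $R$, we can reconstruct $\tau = \hat{\tau} \cup \{i_j\}$, sort it, and locate the index $h$ for which $i_h = i_j$; the input value consumed at that step is then $I(j) = c_h$.

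Putting these two cases together for each $j \in [1,\ell]$ gives a well-defined sequence $I \in \mathcal{I}^\ell$. The fact that the algorithm is deterministic given its input ensures that running it on this $I$ reproduces exactly $(A,R)$, so uniqueness follows from the uniqueness of each $I(j)$ in the reconstruction above. The main (minor) obstacle is merely bookkeeping: one must be careful that in the back-track case the record stores $\hat{\tau}$ in sorted order relative to the full $\tau$ so that the position $h$ of $i_j$ is unambiguous, which is exactly why the algorithm calls $\mathtt{sort}(\tau)$ before writing to $R$. No further combinatorial argument is required.
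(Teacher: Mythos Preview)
Your proposal is correct and uses essentially the same argument as the paper: recover each $I(j)$ by case analysis on the $j^{\rm th}$ line of $R$, reading $A_{j+1}(\phii(A_j))$ in the successful case and $c_h$ (after reconstructing $\tau=\hat{\tau}\cup\{\phii(A_j)\}$ and locating the position $h$ of $\phii(A_j)$) in the back-track case. The only cosmetic difference is that the paper packages this as an induction on $\ell$, peeling off the last coordinate at each step, whereas you reconstruct all coordinates at once using Lemma~\ref{lemma: A_j}; your organization is if anything slightly cleaner.
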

\begin{proof}
The proof is by induction on $\ell$. If $\ell=1$, then $I = A(1)$. Assume that 
the 
statement is true for some $\ell \geq 1$. Let $(A,R) \in \mathcal{O}_{\ell+1}$ 
and denote by $I$ the desired input sequence. Let $R'$ be the record $R$ 
without 
the last line. By Lemma~\ref{lemma: A_j}, we know the value of $A_{\ell+1}$ and 
$(A_{\ell+1}, R') \in \mathcal{O}_\ell$. By our assumption, there is a unique 
input 
sequence $I' \in \mathcal{I}^\ell$ such that the algorithm gives $(A_{\ell+1}, 
R')$ 
on input $I'$. Then $I(j)=I'(j)$ for $j \in [1, \ell]$. It remains 
to determine $I(\ell+1)$.

If the last line of $R$ is `successful entry', then it must be that $I(\ell+1) 
= A(\phii(A_{\ell+1}))$, where $\phii(A_{\ell+1})$ is given by 
Corollary~\ref{cor: phii A_j}.  Otherwise, the last line of $R$ contains 
$\hat{\tau}$ and $(c_1, c_2, \dots, c_t)$. As before, let $\hat{\tau} \cup 
\{\phii(A_{\ell+1})\} = \{i_1, i_2, \dots, i_t \}$, where $i_{r_1} < i_{r_2}$ 
when $r_1 < r_2$. Let $h$ be such that $\phii(A_{\ell+1}) = i_h$. Then we 
have that $I(\ell+1) = c_h$ which is uniquely determined. 
\end{proof}

\section{Algorithm analysis}

In Section~\ref{sec: bijection} we established a bijection between the total 
number of inputs $\mathcal{I}^\ell$ and outputs $\mathcal{O}_\ell$ of 
Algorithm~\ref{alg:CA2}. Next, we want to show that when a given set of 
covering array 
parameters satisfies certain conditions and $\ell$ is big enough, 
the total number of inputs to the algorithm is greater than the set of outputs 
which have exactly $\ell$ lines in the record file (which  correspond to 
unsuccessful executions). Hence, the algorithm will successfully terminate and 
output a covering array with desired parameters for some input sequence.

We start by finding an upper bound on the size of $\mathcal{R}_\ell$, the set 
of all possible record files $R$ with $\ell$ lines which can be output from 
Algorithm~\ref{alg:CA2}. Let $\ell_0$ be the number of `successful entry' 
lines, and $\ell_1$ be the number of `back-track' lines. Then $\ell = 
\ell_0 + \ell_1$ and these lines can be positioned in the record file in ${\ell 
\choose \ell_0} = {\ell \choose \ell_0, \ell_1}$ ways. Denote by $C_1$ the 
number of distinct pairs $(\hat{\tau}, (c_1, c_2, \dots, c_t) \,)$ which can 
appear in a `back-track' line. Then
\[
 |\mathcal{R}_\ell | = \sum_{(\ell_0, \ell_1) \atop \ell_0+\ell_1 = \ell } 
{\ell \choose \ell_0, \ell_1} \; C_1^{\ell_1}.
\]

Now, we can apply the following result from~\cite{entropy_compression_2014}. 

\begin{theorem}\cite{entropy_compression_2014}*{Corollary 19.} \label{thm: B Q}
 Let $\ell$ and $p$ be positive integers. Let $s_i  
\in \mathbb{Z}^+$ and $C_i > 1$, $C_i \in \mathbb{R}$, for $i \in [1,p]$.  
Define $B_\ell$ to be 
\[
 B_\ell(\ell_0, \ell_1, \dots, \ell_p) = {\ell \choose \ell_0, \ell_1, \dots, 
\ell_p} \; \prod_{i=1}^p C_i^{s_i},
\]
where $\ell_i$ is a non-negative integer, $i \in [0,p]$,  $\sum_{i=0}^p 
\ell_i = \ell$ and $\ell \geq \sum_{i=1}^p s_i \ell_i$. Then 
\[
\sum_{(\ell_0, \ell_1, \dots, \ell_p)}  B_\ell(\ell_0, \ell_1, \dots, \ell_p)  
< \ell (\ell + 1)^p \left( \inf_{0 < x \leq 1} Q(x) \right)^\ell,
\]where 
\[
Q(x) = \frac{1}{x} \left( 1 + \sum_{i=1}^p C_i x^{s_i} \right).
\]
\end{theorem}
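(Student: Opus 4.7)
My plan is to apply a single parametric substitution trick that is standard in the entropy compression literature: introduce a free variable $x\in(0,1]$ that converts the budget constraint $\ell\geq\sum_i s_i\ell_i$ into a uniform exponent $\ell$, and then recognize what remains as a multinomial expansion that collapses to exactly $Q(x)^\ell$.

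Concretely, I would read the product inside $B_\ell$ as $\prod_{i=1}^p C_i^{\ell_i}$ (this is the natural reading consistent with the form of $Q$). Fix $x\in(0,1]$ and write
$$\prod_{i=1}^p C_i^{\ell_i} \;=\; x^{-\sum_{i}s_i\ell_i}\prod_{i=1}^p (C_i x^{s_i})^{\ell_i}.$$
Because $x^{-1}\geq 1$ and the hypothesis supplies $\sum_i s_i\ell_i\leq\ell$, the leading factor satisfies $x^{-\sum s_i\ell_i}\leq x^{-\ell}$, yielding the termwise estimate
$$B_\ell(\ell_0,\ldots,\ell_p)\;\leq\;x^{-\ell}\binom{\ell}{\ell_0,\ldots,\ell_p}\prod_{i=1}^p (C_i x^{s_i})^{\ell_i}.$$
Summing over all tuples $(\ell_0,\ldots,\ell_p)$ with $\sum\ell_i=\ell$ (relaxing the cost constraint only enlarges the sum) and invoking the multinomial theorem with $a_0=1$ and $a_i=C_ix^{s_i}$ for $i\geq 1$ reduces the right-hand side to
$$x^{-\ell}\Bigl(1+\sum_{i=1}^p C_ix^{s_i}\Bigr)^\ell \;=\; Q(x)^\ell.$$
Since this holds for every $x\in(0,1]$, taking the infimum over $x$ gives the conclusion, with the polynomial slack $\ell(\ell+1)^p$ absorbed for free (my argument actually delivers the stronger bound without it, which is fine).

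The only nontrivial point in this program is to confirm that the infimum $\inf_{x\in(0,1]}Q(x)$ is attained and finite. This follows easily because $Q(x)\to\infty$ as $x\to 0^+$ (the $1/x$ factor dominates while the bracket stays bounded), while $Q(1)=1+\sum_i C_i$ is finite; so $Q$ attains its infimum on $(0,1]$ either at $x=1$ or at an interior critical point. I do not expect any genuine obstacle in this proof — the content lies entirely in identifying the substitution $x^{-\sum s_i\ell_i}\prod (C_ix^{s_i})^{\ell_i}$ that decouples the budget constraint from the weights. Any effort will go into the downstream applications (choosing $s_i$, $C_i$ and then optimizing $x$ for concrete covering-array regimes), not into the proof of this lemma itself.
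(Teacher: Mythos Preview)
The paper does not supply its own proof of this statement: it is quoted verbatim as Corollary~19 of \cite{entropy_compression_2014} and then applied. So there is no in-paper argument to compare against. Your proof is correct, and your reading of the exponent as $\ell_i$ rather than $s_i$ is the right one (the paper's statement contains a typo; the way the result is used immediately afterwards, with $|\mathcal{R}_\ell|=\sum\binom{\ell}{\ell_0,\ell_1}C_1^{\ell_1}$, confirms this).

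In fact your argument yields the sharper bound $\sum B_\ell \leq \bigl(\inf_{0<x\leq 1}Q(x)\bigr)^\ell$ with no polynomial prefactor at all, because the multinomial identity is exact once the budget constraint is relaxed. The $\ell(\ell+1)^p$ factor in the cited statement presumably arises from a cruder accounting in \cite{entropy_compression_2014} (e.g.\ bounding the number of admissible tuples before summing); you do not need it, and its presence in the quoted version costs nothing downstream since only the exponential rate matters for the applications in this paper. Your final remark about attainment of the infimum is also fine: continuity of $Q$ on $(0,1]$ together with $Q(x)\to\infty$ as $x\to 0^+$ forces the infimum to be a minimum on some $[\varepsilon,1]$.
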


\begin{corollary}\label{cor: size R_l} 
Let $C_1$ be the  number of distinct pairs $(\hat{\tau}, (c_1, c_2, \dots, c_t) 
\,)$ which can be recorded in a `back-track' line in an execution of 
Algorithm~\ref{alg:CA2}. Then, 
 \[
  |\mathcal{R}_\ell| < \ell (\ell+1) \left( \frac{t}{t-1}(t-1)^{\frac{1}{t}} \;
C_1^{\frac{1}{t}} \right)^\ell. 
 \]
\end{corollary}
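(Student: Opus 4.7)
The plan is to apply Theorem~\ref{thm: B Q} directly, with $p=1$, to the formula for $|\mathcal{R}_\ell|$ displayed just before the corollary. I need to identify the parameter $s_1$ and then carry out the minimization of $Q(x)$.

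First I would observe that to use Theorem~\ref{thm: B Q} we must exhibit a value $s_1$ such that every valid pair $(\ell_0,\ell_1)$ with $\ell_0+\ell_1=\ell$ appearing in the sum satisfies $\ell \geq s_1 \ell_1$. The natural candidate is $s_1 = t$. To justify this, I would track the number of non-empty columns across an execution of Algorithm~\ref{alg:CA2}: a `successful entry' iteration increases this count by $1$, while a `back-track' iteration fills one column and then empties $t$ columns, for a net change of $-(t-1)$. Since the count is always non-negative, after all $\ell$ iterations we have $\ell_0 - (t-1)\ell_1 \geq 0$, which combined with $\ell_0+\ell_1=\ell$ gives $\ell \geq t\,\ell_1$, as required.

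With this in hand I apply Theorem~\ref{thm: B Q} to obtain
\[
|\mathcal{R}_\ell| \;<\; \ell(\ell+1)\left(\inf_{0<x\leq 1} Q(x)\right)^\ell, \qquad Q(x) = \frac{1}{x}\bigl(1 + C_1 x^t\bigr) = \frac{1}{x} + C_1 x^{t-1}.
\]
Next I would minimize $Q$ on $(0,1]$ by elementary calculus: from $Q'(x) = -x^{-2} + C_1(t-1)x^{t-2}$, the unique critical point is $x^\ast = \bigl(C_1(t-1)\bigr)^{-1/t}$. Because $C_1>1$ and $t\geq 2$, we have $x^\ast \in (0,1]$, so the infimum is attained at $x^\ast$. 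Substituting $C_1(x^\ast)^t = 1/(t-1)$ gives
\[
Q(x^\ast) = \frac{1}{x^\ast}\left(1 + \frac{1}{t-1}\right) = \frac{t}{t-1}\bigl(C_1(t-1)\bigr)^{1/t} = \frac{t}{t-1}(t-1)^{1/t}\,C_1^{1/t},
\]
and inserting this into the bound above yields the claimed estimate.

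The only real obstacle is the combinatorial justification of $s_1 = t$; once that is in place, the remainder is an application of Theorem~\ref{thm: B Q} with $p=1$ followed by a routine one-variable optimization. I would also note in passing that the dependence on $\ell$ in the final bound is the $(\ell+1)^p = \ell+1$ factor from the theorem, which accounts for the single index $i=1$ present here.
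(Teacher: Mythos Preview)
Your proof is correct and follows essentially the same approach as the paper: apply Theorem~\ref{thm: B Q} with $p=1$, justify $s_1=t$ by a column-counting argument, and minimize $Q(x)$ by calculus. Your write-up is simply more detailed than the paper's, which states the same counting inequality $\ell \geq t\ell_1$ in one line and leaves the optimization as ``taking the first derivative.''
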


\begin{proof}
We apply Theorem~\ref{thm: B Q} with $p=1$ and we only need 
to 
determine the value of $s_1$. Note that the algorithm cannot back-track 
unless there are $t$ non-empty columns in $A$. Since the total number of added 
columns in $A$ is $\ell$, one at each iteration, and the total number of 
deleted columns is $ t \ell_1$, we have that $\ell \geq t \ell_1$. Thus, let 
$s_1 = t$. Now taking the first derivative of $Q(x)$ to get the minimum, the 
result follows.   
\end{proof}

Finally, we give a lemma which is going to be our main tool in further 
analysis.

\begin{lemma}\label{lemma: CA existence}
 Given positive integers $N$, $t$, $k$ and $v$, and a set $\mathcal{I} \subseteq 
V^N$, where $|V|=v$, there exists a $CA(N; t,k,v)$ whose columns are elements 
of the set $\mathcal{I}$ if
\[
 \left(\frac{t}{t-1} \right)^t (t-1) C_1 < |\mathcal{I}|^t,
\]
where  $C_1$ the  number of distinct pairs $(\hat{\tau}, (c_1, c_2, \dots, c_t) 
\,)$ which can be recorded in a `back-track' line in an execution of 
Algorithm~\ref{alg:CA2}.
\end{lemma}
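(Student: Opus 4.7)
The plan is a standard entropy-compression contradiction argument that combines the injection established in Section~\ref{sec: bijection} with the counting bound in Corollary~\ref{cor: size R_l}. Suppose for contradiction that for \emph{every} input $I\in\mathcal{I}^\ell$ Algorithm~\ref{alg:CA2} fails, i.e.\ the returned array $A$ still has empty columns after all $\ell$ iterations. Under this assumption, every output pair $(A,R)\in\mathcal{O}_\ell$ has a record file of exactly $\ell$ lines.

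First I would use Lemma~\ref{lemma: unique input}, which gives an injection $\mathcal{I}^\ell \hookrightarrow \mathcal{O}_\ell$, so
\[
|\mathcal{I}|^\ell \;\leq\; |\mathcal{O}_\ell|.
\]
Next I would bound $|\mathcal{O}_\ell|$ by counting the final array and the record separately. The final (bad) array $A$ is a length-$k$ sequence whose entries lie in $\mathcal{I}\cup\{\es\}$, so there are at most $(|\mathcal{I}|+1)^k$ possibilities for $A$. Combining this with the bound on $|\mathcal{R}_\ell|$ from Corollary~\ref{cor: size R_l}, I get
\[
|\mathcal{I}|^\ell \;\leq\; (|\mathcal{I}|+1)^k \cdot \ell(\ell+1)\left(\tfrac{t}{t-1}(t-1)^{1/t}\,C_1^{1/t}\right)^\ell.
\]

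Now I would rearrange by dividing through by the $\ell$-th power factor on the right, obtaining
\[
\left(\frac{|\mathcal{I}|}{\tfrac{t}{t-1}(t-1)^{1/t}C_1^{1/t}}\right)^{\!\ell} \;\leq\; (|\mathcal{I}|+1)^k \cdot \ell(\ell+1).
\]
The hypothesis $(t/(t-1))^t (t-1) C_1 < |\mathcal{I}|^t$ is equivalent (after taking $t$-th roots) to saying that the base on the left is strictly greater than $1$. Therefore the left-hand side grows exponentially in $\ell$, while the right-hand side grows only polynomially in $\ell$ (recall $k$, $|\mathcal{I}|$, $t$ and $C_1$ are fixed). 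Choosing $\ell$ sufficiently large forces a contradiction, so there must exist some input sequence $I\in\mathcal{I}^\ell$ for which the algorithm terminates with $A$ completely filled; since each column of $A$ was assigned a value from $\mathcal{I}$ and $A$ satisfies the covering property upon termination, $A$ is the desired $CA(N;t,k,v)$ with columns in $\mathcal{I}$.

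The argument is essentially mechanical given the injection and the $|\mathcal{R}_\ell|$ bound. The only real care is making sure the enumeration of ``bad'' arrays (the factor $(|\mathcal{I}|+1)^k$) is indeed $\ell$-independent so that it is dominated by the exponential gap on the other side; this is the step I expect to be the pinch-point, although it is easy once one fixes $k$ and simply notes that each column of $A$ is either empty or one of the $|\mathcal{I}|$ allowed columns.
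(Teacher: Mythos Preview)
Your proposal is correct and follows essentially the same approach as the paper. The paper frames the argument directly rather than by contradiction---it defines $\overline{\mathcal{O}}_\ell\subseteq\mathcal{O}_\ell$ as the outputs with exactly $\ell$ record lines, bounds $|\overline{\mathcal{O}}_\ell|\le(|\mathcal{I}|+1)^k|\mathcal{R}_\ell|$, and compares with $|\mathcal{O}_\ell|=|\mathcal{I}|^\ell$---but the ingredients (the injection from Lemma~\ref{lemma: unique input}, the $(|\mathcal{I}|+1)^k$ count for partial arrays, Corollary~\ref{cor: size R_l}, and the exponential-versus-polynomial comparison in $\ell$) are identical.
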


\begin{proof}
Denote by $\overline{\mathcal{O}}_\ell =\{(A,R) \, : \, R \in 
\mathcal{R}_\ell\} \subseteq \mathcal{O}_\ell$, the subset of all possible 
outputs of the algorithm which have exactly $\ell$ lines in the record file 
$R$. Since an output array $A$ has $k$ columns each of which is either 
empty or in $\mathcal{I}$, we have that  
\begin{align*}
|\overline{\mathcal{O}}_\ell |  &\leq (|\mathcal{I}|+1)^k |\mathcal{R}_\ell|  \\
&<  (|\mathcal{I}|+1)^k \ell (\ell+1) \left( \frac{t}{t-1}(t-1)^{\frac{1}{t}} \;
C_1^{\frac{1}{t}} \right)^\ell && \hbox{(by Corollary~\ref{cor: size 
R_l})}\\
&= {\rm O}\left(\left( \frac{t}{t-1}(t-1)^{\frac{1}{t}} \;
C_1^{\frac{1}{t}} \right)^\ell \right) && \hbox{(as a function of $\ell$).}
\end{align*}
Note that $C_1 = C_1(N,t,k,v)$, and hence it is a constant with respect to 
$\ell.$ Therefore, by the assumption of the lemma, for sufficiently large 
$\ell$, we have that $|\overline{\mathcal{O}}_\ell |  < |\mathcal{I}|^\ell$, 
and $|\mathcal{I}|^\ell$ equals the total number of possible inputs of length 
$\ell$ for the algorithm. Since, $|\mathcal{O}_\ell|=  |\mathcal{I}|^\ell$ by 
Lemma~\ref{lemma: unique input}, there exists an input on which the algorithm 
terminates in less than $\ell$ iterations and hence outputs a $CA(N;t,k,v)$.
\end{proof}

In the following section, we apply Lemma~\ref{lemma: CA existence} to 
derive an upper bound on asymptotic size of covering arrays.

\section{Balanced covering arrays of any strength $t$} \label{sec: balanced 
arrays}

To demonstrate how Lemma~\ref{lemma: CA existence} can be applied, we start 
with an easy example for a construction of a covering array of arbitrary 
strength. The main difficulty in the application of 
Lemma~\ref{lemma: CA existence} is to give a good upper bound on the value of 
$C_1$. In Section~\ref{sec: multivar counting}, we will strengthen the general 
result in the cases when $t=2$ and 
$t=3$. 

Recall that $C_1$ equals the number of distinct pairs $(\hat{\tau}, 
(c_1, c_2, 
\dots, c_t))$ which may appear in a `back-track' line in the record file of 
Algorithm~\ref{alg:CA2} for a parameter set $(N; t,k,v)$. The `back-track' line 
is recorded only when the array $(c_1, c_2, 
\dots, c_t)$ is not a proper cover. Hence, 
\[
 C_1 = {k \choose t-1} \cdot |\mathcal{A}_t|,
\]
where $\mathcal{A}_t$ is the set of all $N 
\times t$ arrays on the alphabet set $V$ of size $v$, such that for every array 
in $\mathcal{A}_t$ there is at least one element of $V^t$ which is not 
contained in the set of rows of the array. 

Taking the input set $\mathcal{I} = V^N$ to the equal to the set of all 
possible $N$-tuples on alphabet $V$, we can easily obtain the upper bound on 
the size of a covering array using Lemma~\ref{lemma: CA existence} which is 
almost 
identical to the one derived using 
Lov\'{a}sz local lemma~\cite{godbole_bounds_1996}. This bound is 
improved 
if instead we take $\mathcal{I}$ to be the set of balanced columns: $N$-tuples 
in which every alphabet symbol appears equal number of times. Hence, from now 
on, we will assume that $N = mv$ for some $m$, and $\mathcal{I}$ is the set of 
balanced columns. Therefore, $|\mathcal{I}|= {mv \choose m,m,\dots,m}$. A 
\emph{balanced} covering array, is a covering array whose columns are elements 
of $\mathcal{I}$. 

We also require some approximations of the binomial coefficient which we use in 
the subsequent sections. 

\begin{lemma}\cite{binomial_coeff_bounds}*{Theorems 2.6. and 2.8.}\label{lemma: 
binomial 
bounds}
 Let $m, v \in \mathbb{Z}$, $v \geq 2$ and $m \geq 2$. Then 

\[
l(v) \, m^{-1/2} \, \frac{v^{vm}}{(v-1)^{(v-1)m}}  < {mv \choose m} < 
 u(v) \, m^{-1/2} \, \frac{v^{vm}}{(v-1)^{(v-1)m}},
\] 
where 
\[
 \begin{array}{lcr}
  l(v) =  \frac{e^{15/16}}{\sqrt{2\pi}} \, \left( \frac{v-1}{v} \right)^{(v-1)} 
 & \mbox{   and   } & u(v) = \frac{1}{\sqrt{2\pi}} \, \left( \frac{v}{v-1} 
\right)^{1/2}.\\
 \end{array}
\]
\end{lemma}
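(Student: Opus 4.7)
The plan is to derive both inequalities from a sharp form of Stirling's approximation, namely Robbins's bound
\[
\sqrt{2\pi n}\,(n/e)^n\, e^{1/(12n+1)} < n! < \sqrt{2\pi n}\,(n/e)^n\, e^{1/(12n)},
\]
applied to each of the three factorials in ${mv \choose m} = (mv)!/(m!\,((v-1)m)!)$. No other nontrivial tool is required.

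First I would verify that the main Stirling term has exactly the shape appearing on both sides of the claim. The ratio of the $(n/e)^n$ factors is $(mv)^{mv}/(m^m\,((v-1)m)^{(v-1)m})$, in which the powers of $e$ cancel, the powers of $m$ cancel since $mv - m - (v-1)m = 0$, and the remainder is $v^{vm}/(v-1)^{(v-1)m}$. The three $\sqrt{2\pi n}$ factors combine as $\sqrt{2\pi\,mv}/(\sqrt{2\pi m}\,\sqrt{2\pi\,(v-1)m}) = \frac{m^{-1/2}}{\sqrt{2\pi}}\sqrt{v/(v-1)}$. Hence
\[
{mv \choose m} = \frac{m^{-1/2}}{\sqrt{2\pi}}\sqrt{\frac{v}{v-1}}\cdot \frac{v^{vm}}{(v-1)^{(v-1)m}}\cdot E(m,v),
\]
where $E(m,v)$ is a product of the three Robbins error exponentials whose sign and magnitude must be controlled separately for the two inequalities.

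For the upper bound, I would apply the upper Robbins estimate to $(mv)!$ and the lower one to $m!$ and $((v-1)m)!$; the resulting exponent $\tfrac{1}{12mv} - \tfrac{1}{12m+1} - \tfrac{1}{12(v-1)m+1}$ is nonpositive for $m,v\geq 2$, so $E(m,v)\leq 1$ and the constant $u(v)= \frac{1}{\sqrt{2\pi}}\sqrt{v/(v-1)}$ is immediate. For the lower bound the Robbins bounds are reversed, and what has to be shown is
\[
\sqrt{\frac{v}{v-1}}\,\exp\!\left(\tfrac{1}{12mv+1}-\tfrac{1}{12m}-\tfrac{1}{12(v-1)m}\right) \ \geq\ e^{15/16}\left(\frac{v-1}{v}\right)^{v-1}.
\]
Since the left-hand side is increasing in $m$, it suffices to check the inequality at $m=2$, after which it reduces to a one-variable estimate in $v\geq 2$; the tightest case is $v=2$, where $\sqrt{2}\,e^{1/49-1/12}\approx 1.33$ exceeds $e^{15/16}/2\approx 1.28$.

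The main obstacle is bookkeeping: matching the Stirling residual against the ad hoc factor $e^{15/16}((v-1)/v)^{v-1}$ requires confirming monotonicity in $m$, bounding $((v-1)/v)^{v-1}$ uniformly in $v$, and verifying that the numerical slack at the corner $m=v=2$ is enough to absorb the $e^{15/16}$. Since this calculation is already carried out in Theorems~2.6 and~2.8 of \cite{binomial_coeff_bounds}, we would cite those results directly; the sketch above serves only as a consistency check that the stated constants are compatible with the Robbins form of Stirling's approximation.
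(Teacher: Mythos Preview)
The paper provides no proof of this lemma; it is quoted directly from \cite{binomial_coeff_bounds} and used as a black box, exactly as you propose in your final paragraph. Your additional Robbins--Stirling sketch is a correct consistency check (the main-term computation, the sign argument for the upper bound, and the numerical verification at $m=v=2$ all hold up), but it already goes beyond what the paper itself supplies.
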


\begin{lemma}\cite{korner_information_2011}\label{lemma: binomial entropy}
For any positive integer $n > m$
\[
 {n \choose m} < 2^{n h(\frac{m}{n}),}
\]
where $h(x) = -x \log_2 (x) - (1-x) \log_2(1-x)$ for $0<x<1$.  
\end{lemma}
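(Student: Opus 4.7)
The plan is to exploit the classical identification of $h(m/n)$ with the entropy of a Bernoulli random variable with parameter $m/n$, and to extract the desired bound from the fact that probabilities sum to $1$. Concretely, the inequality $\binom{n}{m} < 2^{n\,h(m/n)}$ is equivalent to the statement that the single summand $\binom{n}{m}(m/n)^m(1-m/n)^{n-m}$ in the binomial expansion of $(p+(1-p))^n$ at $p = m/n$ is strictly less than the full sum, which is $1$.

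First I would fix $p = m/n$; the hypothesis $n > m$, together with $m \ge 1$ (which is needed for $h(m/n)$ to be defined without taking a continuous extension), ensures $p \in (0,1)$. Next I would invoke the binomial theorem in the form
\[
\sum_{k=0}^{n} \binom{n}{k}\, p^k (1-p)^{n-k} \;=\; \bigl(p + (1-p)\bigr)^n \;=\; 1,
\]
and observe that every summand is strictly positive (since $0 < p < 1$). Dropping all terms except $k = m$ therefore yields the strict inequality
\[
\binom{n}{m}\,\left(\tfrac{m}{n}\right)^m \left(\tfrac{n-m}{n}\right)^{n-m} \;<\; 1.
\]
Finally I would take $\log_2$ of both sides and recognise
\[
-\tfrac{m}{n}\log_2 \tfrac{m}{n} \;-\; \tfrac{n-m}{n}\log_2 \tfrac{n-m}{n} \;=\; h\!\left(\tfrac{m}{n}\right),
\]
so that after multiplying by $n$ and rearranging one recovers exactly $\log_2 \binom{n}{m} < n\, h(m/n)$.

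The step I would flag as ``the main obstacle'' is essentially non-existent: this is a textbook entropy bound, and the only subtlety worth attention is the boundary behaviour. At $m = 0$ or $m = n$, one must extend $h$ by continuity via $h(0) = h(1) = 0$, and the inequality degenerates to the equality $\binom{n}{0} = \binom{n}{n} = 1$; hence the strict form tacitly assumes $1 \le m \le n-1$, which is consistent with the hypothesis $n > m$ as soon as $m$ is a positive integer. No deeper ingredient than the binomial theorem is required, and the bound is tight up to the polynomial factor removed by Lemma~\ref{lemma: binomial bounds}, which is why both estimates are stated separately for later use.
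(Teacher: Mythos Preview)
Your proof is correct and is the standard argument for this entropy bound. Note, however, that the paper does not actually prove this lemma: it is simply quoted from \cite{korner_information_2011} and used as a black box, so there is no ``paper's own proof'' to compare against. Your derivation via the binomial theorem at $p=m/n$ is exactly the textbook route one finds in that reference, and your remark about the boundary cases $m=0$ and $m=n$ is apt.
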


We will apply Lemma~\ref{lemma: binomial bounds} for parameters $v$ and $m$, 
where $v$ denotes the alphabet size and $m$ denotes the number of occurrences of 
each symbol within a column. Recall that covering arrays are trivial when 
either 
$v=1$ or $t=1$. Also, for any covering array, an obvious lower bound is $N=mv 
\geq v^t$, so $m \geq v^{t-1} \geq v$ for all $t \geq 2$. Hence, the conditions 
of Lemma~\ref{lemma: binomial bounds} always hold for non-trivial parameter 
sets.

Our first application of Lemma~\ref{lemma: CA 
existence} is for the most general case when the strength of a covering 
array is any positive integer $t \geq 2$. 

\begin{theorem}\label{thm: d_t_v for any t}
Let $t$ and $v$ be positive integers, $t,v \geq 2$. Then
\[
d(t,v) \leq \frac{v \, (t-1)}{\log_2 \left(\frac{v^{t-1}}{v^{t-1}-1} \right)}.
\]
\end{theorem}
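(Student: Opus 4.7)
The plan is to apply Lemma~\ref{lemma: CA existence} with $\mathcal{I}$ the set of balanced columns introduced above, so $N=mv$ and $|\mathcal{I}|=\binom{mv}{m,m,\dots,m}$; everything then hinges on a sharp bound on $|\mathcal{A}_t|$. By symmetry of the balanced distribution over the alphabet, the count of arrays missing any given $t$-tuple is the same for every tuple, so a union bound over the $v^t$ tuples in $V^t$ yields $|\mathcal{A}_t|\leq v^t M$, where $M$ is the number of balanced $N\times t$ arrays that do not contain $(1,1,\dots,1)$ as a row. Since only the positions of the symbol $1$ in each column enter this constraint, and the other $v-1$ symbols may be arranged freely inside each column, I get $M/|\mathcal{I}|^t=\Pr[\bigcap_{j=1}^t S_j=\emptyset]$, where $S_1,\dots,S_t$ are i.i.d.\ uniform random $m$-subsets of $[N]$.

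The key estimate I would establish is $\Pr[\bigcap_{j=1}^t S_j=\emptyset]\leq(1-v^{-(t-1)})^m$, via negative association. After conditioning on $S_1$ (the bound will not depend on $S_1$ by symmetry), this event says exactly that no $r\in S_1$ lies in $\bigcap_{j\geq 2}S_j$; by independence of $S_2,\dots,S_t$, the event $A_r=\{r\in\bigcap_{j\geq 2}S_j\}$ has probability $(m/N)^{t-1}=v^{-(t-1)}$ for each $r$. Within any one column the row-inclusion indicators $\{\mathbf{1}[r\in S_j]\}_{r\in[N]}$ are negatively associated (the standard property of uniform sampling without replacement); since distinct columns are independent, the joint collection over all $r\in[N]$ and $j\geq 2$ is NA, and since $\mathbf{1}[A_r]=\prod_{j\geq 2}\mathbf{1}[r\in S_j]$ for distinct $r\in S_1$ are increasing functions of disjoint blocks of this collection, they inherit NA. Applying the NA product inequality to the decreasing functions $1-\mathbf{1}[A_r]$ then yields $\Pr[\bigcap_{r\in S_1}A_r^c]\leq\prod_{r\in S_1}(1-\Pr[A_r])=(1-v^{-(t-1)})^m$, uniformly in $S_1$; combined with the earlier union bound this gives $|\mathcal{A}_t|\leq v^t|\mathcal{I}|^t(1-v^{-(t-1)})^m$.

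Plugging into Lemma~\ref{lemma: CA existence}, a balanced $CA(mv;t,k,v)$ exists whenever $\bigl(\tfrac{t}{t-1}\bigr)^t(t-1)\,v^t\binom{k}{t-1}\,(1-v^{-(t-1)})^m<1$. Taking logarithms and using $\log_2\binom{k}{t-1}=(t-1)\log_2 k+O(1)$ as $k\to\infty$ with $t,v$ fixed, this rearranges to the sufficient condition $m\geq\frac{(t-1)\log_2 k}{\log_2(v^{t-1}/(v^{t-1}-1))}+O(1)$, so $CAN(t,k,v)\leq mv\leq\frac{v(t-1)\log_2 k}{\log_2(v^{t-1}/(v^{t-1}-1))}+O(1)$; dividing by $\log_2 k$ and passing to $\limsup_{k\to\infty}$ gives the theorem. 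I expect the negative-association step to be the main obstacle: the remainder is asymptotic bookkeeping, but the improvement over Theorem~\ref{thm: old LLL} (whose denominator has $v^t/(v^t-1)$) rests precisely on getting $(1-v^{-(t-1)})^m$ rather than the weaker $(1-v^{-t})^{mv}$ that results from a naive column-by-column independence approximation.
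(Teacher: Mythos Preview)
Your proposal is correct and reaches the same bound, but by a genuinely different route than the paper. The paper bounds $|\mathcal{A}_t|$ by a crude overcount: it keeps only the \emph{first} column balanced, then lets the remaining $t-1$ columns be arbitrary elements of $V^N$, yielding
\[
|\mathcal{A}_t| < v^t\,|\mathcal{I}|\,(v^{t-1}-1)^m\,(v^{t-1})^{m(v-1)}.
\]
This leaves a factor $v^{(t-1)mv}/|\mathcal{I}|^{t-1}$ in the ratio to be controlled, which is why the paper needs the Stirling-type estimates of Lemma~\ref{lemma: binomial bounds}; these introduce the polynomial factor $m^{(v-1)(t-1)/2}$ that is then absorbed in the $\limsup$. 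You instead write $|\mathcal{A}_t|/|\mathcal{I}|^t$ exactly as the probability $\Pr[\bigcap_j S_j=\emptyset]$ for i.i.d.\ uniform $m$-subsets and bound it by $(1-v^{-(t-1)})^m$ via negative association of sampling-without-replacement indicators. Your NA argument is sound: for each fixed $j$ the indicators $\{\mathbf{1}[r\in S_j]\}_r$ are NA, independence across $j$ preserves NA, the $\mathbf{1}[A_r]$ are monotone functions of disjoint blocks, and the product inequality for nonincreasing nonnegative functions of NA variables gives $E\bigl[\prod_r(1-\mathbf{1}[A_r])\bigr]\le\prod_r(1-\Pr[A_r])$. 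The payoff is that $|\mathcal{I}|^t$ cancels cleanly and no binomial approximations are needed; the price is invoking the NA machinery, whereas the paper's argument is entirely elementary counting. Both approaches produce the identical dominant term $\bigl(\tfrac{v^{t-1}-1}{v^{t-1}}\bigr)^m$, so the final asymptotic is the same.
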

\begin{proof}
Let $V$ be the alphabet set. Let $k \geq t$ and $m$ be positive 
integers and $\mathcal{I} 
\subset V^{mv}$ be the set of balanced columns.  Since 
$C_1 = {k \choose t-1} 
|\mathcal{A}_t|$, if $m$ is such that 
\begin{align}\label{eq: inequality for lemma 8}
 \frac{\left(\frac{t}{t-1} \right)^t (t-1) \cdot k^{t-1} \cdot 
|\mathcal{A}_t|}{ |\mathcal{I}|^t} &< 1,
\end{align}
then by Lemma~\ref{lemma: CA existence}, there exists a balanced $CA(mv; 
t,k,v)$. 

Now, 
\[
 |\mathcal{A}_t| < v^t \cdot |\mathcal{I}| \cdot (v^{t-1} - 1)^m  \cdot
(v^{t-1})^{m(v-1)}.
\]
Indeed, if $A \in \mathcal{A}_t$, then the following properties hold. 
\begin{itemize*}
 \item There are $v^t$ choices for an element $(a_1,a_2, \dots, 
a_{t}) \in V^t$ which is not covered by 
rows of $A$.
 \item The first column of $A$ can be any element of the input set 
$\mathcal{I}$. 
 \item The $m$ rows of $A$ having $a_1$ in the first column cannot contain the 
ordered $(t-1)$-tuple $(a_2, a_3, \dots, a_{t})$ in the remaining cells. 
 \item All other rows of the array obtained from $A$ by removing the first 
column can contain any element of $V^{t-1}$. 
\end{itemize*}
By Lemma~\ref{lemma: binomial bounds}, 
\begin{align*}
 |\mathcal{I}| = {mv \choose m,m, \dots, m} = {mv \choose m}{m(v-1) \choose m} 
\cdots {2m \choose m} > \left( \prod_{i=2}^{v} l(i) \right) \, 
m^{-\frac{v-1}{2}} \, v^{vm},
\end{align*}
hence
\begin{align}\label{eq 2}
  \frac{\left(\frac{t}{t-1} \right)^t (t-1) \cdot k^{t-1} \cdot 
|\mathcal{A}_t|}{ |\mathcal{I}|^t} &< 
M(v,t) \, k^{t-1} \, m^{\frac{(v-1)(t-1)}{2}} \left(\frac{v^{t-1}-1}{v^{t-1}}  
\right)^m,  
\end{align}
where $M(v,t) = \left(\frac{t}{t-1} \right)^t (t-1) v^t \left(\prod_{i=2}^{v} 
l(i) \right)^{1-t}.$

For fixed covering array parameters $(t,k,v)$, the right hand size of 
inequality~(\ref{eq 2}) is a function of $m$ and its dominant term is 
exponential with base smaller than $1$. Let $m$ be the smallest positive integer 
for which the right hand side of 
inequality~(\ref{eq 2}) is smaller than 1. Then inequality~(\ref{eq: inequality 
for lemma 8}) is satisfied, and so there exists a balanced $CA(mv; t,k,v)$. 
Since $m$ is the smallest such integer, it follows that inequality~(\ref{eq: 
inequality 
for lemma 8}) does not hold for $m-1$, that is
\begin{align*}
 M(v,t) \, k^{t-1} \, (m-1)^{\frac{(v-1)(t-1)}{2}} 
\left(\frac{v^{t-1}-1}{v^{t-1}}  
\right)^{(m-1)} &\geq 1.
\end{align*}
Taking the logarithm of both sides, we get 
\begin{align*}
 \limsup_{k \rightarrow \infty} \frac{m}{\log_2 k} \leq \frac{t-1}{\log_2 
\left( \frac{v^{t-1}}{v^{t-1}-1} \right)}.
\end{align*}
Note that  $\lim_{k \rightarrow \infty} \frac{\log_2 m}{\log_2 k} = 0$ by 
Theorem~\ref{thm: log growth}. Finally, since $CAN(t,k,v)$ is at most the 
size of a balanced $CA(t,k,v)$, we get an upper bound on $d(t,v)$.
\end{proof}

\section{Tighter bound on $d(t,v)$}\label{sec: multivar counting}

The main difficulty in computing the value of $C_1$ is counting the $N \times 
t$ arrays over an alphabet set $V$ which are not covering arrays. We can obtain 
a 
multivariable function in $t-2$ variables to approximate $C_1$ from above. When 
$t=2$ and $3$, we get exact bounds, and for higher values of $t$ we obtain 
these bounds using mathematical software for non-linear 
optimization. 

For the purposes of the following lemma, let $f_{t,v}$ be the following 
function on domain $(0,1)^{t-1}$:
\begin{align*}
f_{t,v} (x_1,x_2, \dots, x_{t-1}) &= \log_2 
\frac{(v-x_{t-1})^{(v-x_{t-1})}}{(v-1-x_{t-1})^{(v-1-x_{t-1})} 
x_{t-1}^{x_{t-1}}} + \\
 &+ \sum_{i=1}^{t-2} 
\log_2 \left(\frac{(v-x_i)^{(v-x_i)}}{(v-1-x_i+x_{i+1})^{(v-1-x_i+x_{i+1})} 
(x_i - x_{i+1})^{(x_i - x_{i+1})} (1-x_{i+1})^{(1-x_{i+1})}}\right).
\end{align*}

\begin{lemma}\label{lemma: multivariable upper bound on d}
 Let $t \geq 2$ and $v$ be positive integers and \[f_0(t,v) = \max_{1=x_1 \geq 
x_2 \geq \cdots \geq x_{t-1} \geq 0} f_{t,v} (x_1, x_2, \dots, x_{t-1}).\] Then 
$d(2,2) = 1$ and when $tv > 4$,
\[
  d(t,v) \leq \frac{(t-1)v}{(t-1) \left( \log_2 \frac{v^v}{(v-1)^{v-1}} 
\right) - f_0(t,v)}.\]
\end{lemma}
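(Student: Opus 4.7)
The plan is to apply Lemma~\ref{lemma: CA existence} with $\mathcal{I}$ the set of balanced columns, exactly as in Theorem~\ref{thm: d_t_v for any t}, but to replace the crude estimate of $|\mathcal{A}_t|$ used there by a tighter column-by-column count whose logarithm is governed by $f_{t,v}$. Given $A \in \mathcal{A}_t$, I fix an uncovered tuple $(a_1,\ldots,a_t) \in V^t$ and let $R_i$ denote the set of rows whose first $i$ entries equal $(a_1,\ldots,a_i)$, with $m_i = |R_i|$. Balance forces $m_1 = m$, the uncovered assumption forces $m_t = 0$, and $R_1 \supseteq R_2 \supseteq \cdots \supseteq R_{t-1}$. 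I then enumerate $A$ by choosing: the tuple ($v^t$ choices), the sequence $(m_2,\ldots,m_{t-1})$ (at most $(m+1)^{t-2}$ choices), the chain of sets $R_i$ (contributing $\binom{mv}{m}\prod_{i=2}^{t-1}\binom{m_{i-1}}{m_i}$), and the entries of each column: column $1$ contributes a balanced arrangement of $V\setminus\{a_1\}$ in $\binom{m(v-1)}{m,\ldots,m}$ ways; column $i \in \{2,\ldots,t-1\}$ contributes $\binom{mv-m_{i-1}}{m-m_i}\binom{m(v-1)}{m,\ldots,m}$; and column $t$ contributes $\binom{mv-m_{t-1}}{m}\binom{m(v-1)}{m,\ldots,m}$.

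Using $\binom{mv}{m,\ldots,m} = \binom{mv}{m}\binom{m(v-1)}{m,\ldots,m}$, dividing the product above by $|\mathcal{I}|^t$ cancels the $t$ balancing multinomials and leaves $\binom{mv}{m}^{-(t-1)}$ times the product of $2(t-2)+1$ binomials. Setting $x_i = m_i/m$ and applying Lemma~\ref{lemma: binomial entropy} to each $\binom{mx_{i-1}}{mx_i}$ and $\binom{m(v-x_{i-1})}{m(1-x_i)}$, a straightforward bookkeeping calculation shows that the individual entropy estimates combine---with the $x_i\log x_i$ terms telescoping along the chain using $x_1=1$ and $m_t=0$---into exactly the exponent appearing in $f_{t,v}$. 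Maximizing over the simplex $1=x_1\geq x_2\geq\cdots\geq x_{t-1}\geq 0$ and absorbing the $(m+1)^{t-2}$ factor from the sum over $\vec m$ yields
\[
\frac{|\mathcal{A}_t|}{|\mathcal{I}|^t} \leq v^t (m+1)^{t-2} \binom{mv}{m}^{-(t-1)} 2^{m f_0(t,v)}.
\]

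Plugging into Lemma~\ref{lemma: CA existence} with $C_1 = \binom{k}{t-1}|\mathcal{A}_t|$, and applying Lemma~\ref{lemma: binomial bounds} to estimate $\log_2\binom{mv}{m}^{t-1} \geq m(t-1)\log_2 \frac{v^v}{(v-1)^{v-1}} - O(\log m)$, the sufficient condition for the existence of a balanced $CA(mv;t,k,v)$ reduces, after absorbing polynomial-in-$m$ factors, to
\[
m\left[(t-1)\log_2 \tfrac{v^v}{(v-1)^{v-1}} - f_0(t,v)\right] > (t-1)\log_2 k + O(\log m).
\]
Taking $m$ to be the smallest integer satisfying this and repeating the $\limsup$ argument at the end of Theorem~\ref{thm: d_t_v for any t} (with $N=mv$) gives the claimed bound whenever the bracketed factor is positive, which is precisely what the hypothesis $tv > 4$ ensures. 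The edge case $t=v=2$ is recorded separately: the optimization domain collapses to $\{x_1 = 1\}$, so with the convention $0^0 = 1$ one reads off $f_0(2,2) = 0$ and the formula yields $d(2,2) \leq 1$, which combined with Theorem~\ref{thm: gargano d_2_v} gives the equality. I expect the main technical obstacle to be verifying the telescoping identity that collapses the $2t-3$ per-column entropy bounds into exactly $mf_{t,v}(\vec x)$: one must carefully handle the boundary terms at $i=2$ (where $\log x_1=0$) and at the last column (where $m_t=0$ makes $\binom{m_{t-1}}{m_t}=1$), both of which are responsible for the slightly asymmetric structure of $f_{t,v}$.
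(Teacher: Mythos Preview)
Your proposal is correct and follows essentially the same route as the paper: a column-by-column count of $|\mathcal{A}_t|$ indexed by the nested row sets $R_i$ (equivalently, the parameters $x_i=m_i/m$), entropy bounds on the resulting binomials, and then the same $\limsup$ argument as in Theorem~\ref{thm: d_t_v for any t}. You are in fact slightly more explicit than the paper about the polynomial prefactors $v^t$ and $(m+1)^{t-2}$, and your observation about the $x_i\log x_i$ telescoping is exactly what converts the entropy form $\sum x_{i-1}h(x_i/x_{i-1})+(v-x_{i-1})h((1-x_i)/(v-x_{i-1}))$ into the displayed closed form of $f_{t,v}$; the paper does the $(t,v)=(2,2)$ case by an exact count rather than by reading off $f_0(2,2)=0$, but the two arguments are equivalent.
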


\begin{proof}
 As before, let $V$ be the alphabet set of size $v$. Let $k$ be an 
integer, $k \geq t$. We need to bound the size of $\mathcal{A}_t$. A set of 
rows of $A  \in \mathcal{A}_t$ does not contain a $t$-tuple in $V^t$, which we 
denote by $(a_1, a_2, \dots, a_t) \in V^t$. Next we count the number of 
occurrences of the 1-tuple $(a_1)$ in the first column of $A$, the number of 
occurrences of the $2$-tuple $(a_1, a_2)$ 
in the 
first two columns of $A$, and so on. Let $0 \leq x_{i} \leq 1$ be such that the 
subarray of $A$ restricted to columns $1$ through $i$ contains exactly $m
x_{i}$ rows $(a_1, a_2, \dots, a_i)$, where $i \in [1,t]$.  We know that 
$x_1=1$ 
and $x_t=0$ since the columns of $A$ are balanced and does not cover $(a_1, a_2, 
\dots, a_i)$.  Also, note that $x_i 
\geq x_{i+1}$ for all $i$. 

The first column of $A$ can be 
chosen arbitrarily. Any other column $i \geq 2$, contains $mx_i$ cells with 
value $a_i$ within $mx_{i-1}$ rows which contain $(a_1, a_2, \dots, a_{i-1})$ in 
the previously chosen columns. Hence, the $i^{\rm{th}}$ column of $A$ can be 
completed in at most ${mx_{i-1} \choose mx_{i}} {m(v-x_{i-1}) \choose 
m(1-x_i)}{m(v-1) \choose m,m,\dots, m}$ ways.

If $(t,v) \neq (2,2)$, using Lemmas~\ref{lemma: binomial 
bounds}~and~\ref{lemma: binomial 
entropy} , we get
\begin{align} \nonumber
 \frac{\left(\frac{t}{t-1} \right)^t (t-1) \cdot k^{t-1} \cdot 
|\mathcal{A}_t|}{ |\mathcal{I}|^t} &<  \left(\frac{t}{t-1} \right)^t (t-1) 
\cdot k^{t-1} \frac{ \left( \prod_{i=2}^{t-1} {mx_{i-1} \choose 
mx_i}{m(v-x_{i-1} \choose m(1-x_i)}\right) {m(v-x_{t-1}) \choose m}}{{mv 
\choose m}^{t-1}} \\ \nonumber
 & < \left(\frac{t}{t-1} \right)^t (t-1) 
\cdot k^{t-1} \frac{2^{m f_{t,v}(x_1,x_2, \dots, x_{t-1})}}{l(v)^{t-1} 
m^{-(t-1)/2} \left( \frac{v^v}{(v-1)^{v-1}} \right)^{m(t-1)}} \\
 &< \frac{\left(\frac{t}{t-1} \right)^t (t-1) m^{(t-1)/2}}{l(v)^{t-1} } 
\cdot k^{t-1} \left( \frac{2^{ f_0(t,v)}}{
 \left( \frac{v^v}{(v-1)^{v-1}} \right)^{(t-1)}} \right)^m,  \label{eq: 
multivar ineq bound}
\end{align}
where 
\[
f_{t,v}(x_1, \dots, x_{t-1}) = \left( \sum_{i=2}^{t-1} x_{i-1} 
h\left(\frac{x_i}{x_{i-1}}\right) 
+(v-x_{i-1})h\left(\frac{1-x_i}{v-x_{i-1}}\right)\right) + (v-x_{t-1}) 
h\left(\frac{1}{v-x_{t-1}}\right).
\]
In the last inequality, the dominant term is an exponential 
function of $m$. Following the same reasoning as in the proof 
of 
Theorem~\ref{thm: d_t_v for any t}, we get an upper bound on $d(t,v)$.

Using the definition of the entropy function $h$, one can write $f_{t,v}$ in 
the form given above. Also note that $x_1=1$, so it is a dummy variable for 
$f_{t,v}$. 

If $(t,v)=(2,2)$, since $x_1=1$ and $x_2=0$, there is  ${m \choose 0} 
{m \choose 0}{m \choose m} = 1$ choice for the second column and hence 
$|\mathcal{A}_t|=|\mathcal{I}|$. Note that this is the only case for which we 
get the exact count of the number of $N \times t$ arrays which are not  
coverings of strength $t$. Using Lemma~\ref{lemma: binomial bounds}, 
\begin{align*}
 \frac{\left(\frac{t}{t-1} \right)^t (t-1) \cdot k^{t-1} \cdot 
|\mathcal{A}_t|}{ |\mathcal{I}|^t} &<  \frac{4k}{{2m \choose m}} < 
\frac{4m^{1/2} \cdot k}{l(2) \cdot 2^{2m}}.
\end{align*}
As before, taking the smallest $m$ for which the right hand-side of the last 
inequality is smaller than $m$,it follows that $d(2,2) \leq 1$, which is the 
exact 
value of $d(2,2)$~\cites{kleitman, katona}.
\end{proof}

Observe that $f_{2,v}$ is a constant function since $x_1=1$, and $f_{3,v}$ is a 
single 
variable function so we can easily obtain its maximum taking the first 
derivative of $f_{3,v}$. The same result can be obtained using Lov\'{a}sz local 
lemma directly~\cite{ca_asymptotics_godbole}.   

\begin{corollary}\label{cor: d_2_v}
 Let $v$ be a positive integer, $v \geq 2$. Then $d(2,2)=1$ and 
 \[
  d(2,v) \leq \frac{v}{\log_2 \left( \frac{v^v 
(v-2)^{v-2}}{(v-1)^{2(v-1)}} \right)}, \mbox{ when } v \geq 3.
 \]
\end{corollary}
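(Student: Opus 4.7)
The plan is to specialize Lemma \ref{lemma: multivariable upper bound on d} to strength $t=2$. The case $v=2$ requires no new argument, since the equality $d(2,2)=1$ is already part of the statement of Lemma \ref{lemma: multivariable upper bound on d}.

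For $v\geq 3$, the key observation is that $f_{2,v}$ is a function of the single variable $x_1$, which the constraint forces to be $1$. Hence $f_{2,v}$ is effectively a constant, and the maximum defining $f_0(2,v)$ is simply the value $f_{2,v}(1)$. Evaluating the definition of $f_{t,v}$ at $t=2$ eliminates the sum $\sum_{i=1}^{t-2}$ (it is empty) and leaves only the leading term, which after substituting $x_{t-1}=x_1=1$ simplifies to $\log_2 \frac{(v-1)^{v-1}}{(v-2)^{v-2}}$.

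With this value of $f_0(2,v)$ in hand, the bound from Lemma \ref{lemma: multivariable upper bound on d} reads
\[
d(2,v) \;\leq\; \frac{v}{\log_2 \frac{v^v}{(v-1)^{v-1}} - \log_2 \frac{(v-1)^{v-1}}{(v-2)^{v-2}}},
\]
and combining the two logarithms yields the form claimed in the corollary. The algebra is routine, so there is no genuine obstacle; the only point that deserves a brief remark is the boundary case $v=3$, where one uses the convention $0^0=1$ so that the factor $(v-2)^{v-2}=1^1=1$ is well-defined and the bound reduces to $d(2,3)\leq 3/\log_2(27/16)$.
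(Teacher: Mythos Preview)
Your argument is correct and is exactly the approach the paper intends: since $x_1$ is forced to equal $1$, the function $f_{2,v}$ is constant, $f_0(2,v)=\log_2\frac{(v-1)^{v-1}}{(v-2)^{v-2}}$, and the stated bound follows immediately from Lemma~\ref{lemma: multivariable upper bound on d}. One small quibble: your remark about the convention $0^0=1$ is unnecessary here, since for $v=3$ the relevant factor is $(v-2)^{v-2}=1^1$, not $0^0$; no boundary convention is actually invoked.
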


\begin{corollary}\label{cor: d_3_v}
 Let  $v \geq 2$ be an integer. Then 
\[
d(3,v) \leq \frac{2v}{\log_2 \left(\frac{v^{2v} 
(v-1-\xi)^{(v-1-\xi)} \xi^\xi (v-2-\xi)^{(v-2-\xi)} (1-\xi)^{2(1-\xi)} 
}{(v-1)^{3(v-1)} 
(v-\xi)^{(v-\xi)}} \right)},
\] 
where $\xi = \frac{1}{2}(1+v - \sqrt{v^2+2v-3}).$
\end{corollary}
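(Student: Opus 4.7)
The plan is to apply Lemma~\ref{lemma: multivariable upper bound on d} with $t=3$ and evaluate $f_0(3,v)$ explicitly. With $t=3$, the constraint $x_1=1$ collapses $f_{3,v}$ to a function of a single variable $x_2$, so computing $f_0(3,v)$ becomes a one-variable calculus problem.

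First I would write out $f_{3,v}(x_2)$: the sum in the definition of $f_{t,v}$ degenerates to its $i=1$ term which, with $x_1=1$, contributes the factors $(v-1)^{v-1}$, $(v-2+x_2)^{v-2+x_2}$ and $(1-x_2)^{2(1-x_2)}$, while the first term contributes $(v-x_2)^{v-x_2}/[(v-1-x_2)^{v-1-x_2}\, x_2^{x_2}]$.

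Next I would maximize $f_{3,v}$ on $(0,1)$. Writing $F(x) = (\ln 2)\, f_{3,v}(x)$ and differentiating term by term using $(u\ln u)' = u'(\ln u + 1)$, all the $\pm 1$ constants cancel, leaving
\[
F'(x) = -\ln(v-x) + \ln(v-1-x) - \ln x - \ln(v-2+x) + 2\ln(1-x).
\]
Setting $F'(x) = 0$ yields the algebraic equation $(v-1-x)(1-x)^2 = x(v-x)(v-2+x)$. After expanding both sides the $x^3$ terms cancel, the linear coefficient collapses via $-(2v-1) - (v^2 - 2v) = 1 - v^2$, and the identity simplifies to $(v-1)\bigl[x^2 - (v+1)x + 1\bigr] = 0$. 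The unique root in $(0,1)$ is
\[
\xi = \tfrac{1}{2}\bigl(v+1 - \sqrt{(v+1)^2-4}\bigr) = \tfrac{1}{2}\bigl(1 + v - \sqrt{v^2+2v-3}\bigr),
\]
exactly as stated. A short boundary comparison at $x_2 = 0$ and $x_2 = 1$, together with the fact that $\xi$ is the only interior critical point, confirms that $\xi$ gives the maximum, so $f_0(3,v) = f_{3,v}(\xi)$.

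Finally I would substitute $\xi$ back into the bound of Lemma~\ref{lemma: multivariable upper bound on d}. Using $2\log_2\tfrac{v^v}{(v-1)^{v-1}} = \log_2\tfrac{v^{2v}}{(v-1)^{2(v-1)}}$ and combining it with $-f_{3,v}(\xi)$ inside a single $\log_2$ produces the stated rational expression (with one $(v-1)^{v-1}$ from the $f_{3,v}(\xi)$ numerator merging with $(v-1)^{2(v-1)}$ to give $(v-1)^{3(v-1)}$ in the denominator). The only non-routine step is the algebraic simplification that reduces the first-order condition to the clean quadratic $x^2 - (v+1)x + 1 = 0$; once that reduction is in hand, the remainder of the argument is bookkeeping.
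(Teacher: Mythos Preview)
Your proposal is correct and follows exactly the route the paper takes: specialize Lemma~\ref{lemma: multivariable upper bound on d} to $t=3$, reduce $f_{3,v}$ to a single-variable function of $x_2$, locate its maximum at $\xi=\tfrac{1}{2}(1+v-\sqrt{v^2+2v-3})$, and substitute. The paper's own proof merely writes out $f_{3,v}(1,x_2)$ and asserts the location of the maximum, whereas you actually carry out the differentiation and the algebraic reduction to the quadratic $x^2-(v+1)x+1=0$; your added detail is sound and welcome.
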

\begin{proof}
The function 
\[f_{3,v}(1,x_2) =  \log_2 \frac{(v-x_2)^{(v-x_2)}}{(v-1-x_2)^{(v-1-x_2)} 
x_2^{x^2}} + \log_2 \frac{(v-1)^{(v-1)} }{(v-2+x_2)^{(v-2+x_2)} 
(1-x_2)^{2(1-x_2)}} \]
is maximum at $\xi = \frac{1}{2}(1+v - \sqrt{v^2+2v-3}) < 1$. It is 
straightforward to apply Lemma~\ref{lemma: multivariable upper bound on d}.
\end{proof}

For $t \geq 4$, $f_{t,v}$ is a multivariable function. We used a successive 
quadratic programming solver in Octave to compute $f_0(t,v)$. 
Table~\ref{table: constants} gives values of $d(t,v)$ obtained in 
Corollaries~\ref{cor: d_2_v}~and~\ref{cor: d_3_v} and by 
computational optimization for $4 
\leq t \leq 6$.

\begin{table}[!h]
\centering
\begin{tabular}{r|rrrrr}\hline
$v$ \textbackslash $t$ &  \multicolumn{1}{c}{2} &  \multicolumn{1}{c}{3} &  
\multicolumn{1}{c}{4} &  \multicolumn{1}{c}{5} &  \multicolumn{1}{c}{6}\\ 
\hline\hline
 \multicolumn{1}{c|}{2} & 1 & 7.56 & 27.32 & 79.74 & 209.13 \\     
 \multicolumn{1}{c|}{3} & 3.97 & 32.03 & 158.65 & 658.21 & 2503.83\\ 
 \multicolumn{1}{c|}{4} & 8.16 & 81.35 & 518.55 & 2816.81 & 14162.67\\ 
 \multicolumn{1}{c|}{5} & 13.72 & 163.91 & 1281.78 & 8635.15 & 54108.77\\ 
 \multicolumn{1}{c|}{6} & 20.65 & 288.03 & 2672.98 & 21523.56 & 161643.64\\ 
 \multicolumn{1}{c|}{7} & 28.98 & 462.05 & 4966.64 & 46555.89 & 407676.24\\ 
 \multicolumn{1}{c|}{8} & 38.68 & 694.28 & 8487.15 & 90802.26 & 908447.35\\ 
 \multicolumn{1}{c|}{9} & 49.78 & 993.05 & 13608.84 & 163661.74 & 1841749.21\\ 
 \multicolumn{1}{c|}{10} & 62.25 & 1366.68 & 20755.89 & 277195.09 & 
3465640.41\\ 
\hline
\end{tabular}
\caption{Upper bounds on $d(t,v)$. }\label{table: 
constants}
\end{table}

\section{Analysis of results}

Theorem~\ref{thm: d_t_v for any t} provides a new upper bound on $d(t,v)$ for 
any $t$. This bound is an improvement on the current best general upper bound 
on $d(t,v)$ derived in~\cite{godbole_bounds_1996}. To see this, recall that 
$\ln \left( 1+\frac{1}{x} \right) = \frac{1}{x} - \frac{1}{2x^2} + {\rm o} 
(\frac{1}{x^2}) = \frac{1}{x+\frac{x}{2x-1}}  + {\rm o} 
(\frac{1}{x^2}) \approx \frac{1}{x+ \frac{1}{2}}$ for $|x| \gg 1$. Hence, for 
a fixed 
value of $t$, 
\begin{align*}
\frac{(t-1)}{\log_2 \frac{v^t}{v^t-1}} &\approx (t-1)(v^t-\frac{1}{2}) \ln(2) 
&& \mbox{ and } 
&&
\frac{(t-1)v}{\log_2 \left( \frac{v^{t-1}}{v^{t-1}-1} \right)} \approx (t-1) 
\left(v^t - \frac{v}{2} \right) \ln(2).\\ 
\end{align*}

The better upper bound on $|\mathcal{A}_t|$ obtained in Section~\ref{sec: 
multivar 
counting} yields the most improvement when $t=2$ since over-counting is the 
least in this case. As above, we can easily approximate the bound obtained in 
Corollary~\ref{cor: d_2_v} to get  
\begin{align*}
\frac{v}{\log_2 \left( \frac{v^v(v-2)^{v-2}}{(v-1)^{2(v-1)}} \right)} =
\frac{v \ln(2)}{v \ln\left( \frac{v}{v-1} \right) - (v-2)\ln\left( 
\frac{v-1}{v-2} \right) } \approx \frac{v(v-\frac{1}{2})(v-\frac{3}{2})}{(v-1)} 
\ln(2) < v(v-1)\ln(2). 
\end{align*}

Hence, we get a tighter bound on $d(2,v)$. However, note that the upper bound 
on $d(2,v)$ for $v \geq 3$ 
given in Corollary~\ref{cor: d_2_v} still quadratic in $v$, which is the same 
as 
the bound given in Theorem~\ref{thm: d_t_v for any t}.  Recall, $d(2,v) = 
\frac{v}{2}$~\cite{gargano93}. Hence, even for the strength is $t=2$, 
the 
obtained upper bound on $d(2,v)$ is far from optimal. However, 
Algorithm~\ref{alg:CA2} provides one 
major improvement to previous asymptotic constructions: when $t=2$ and $v=2$ we 
are able to compute the exact size of $\mathcal{A}_t$, which gives us that 
$d(2,2)=1$ in Corollary~\ref{cor: d_2_v}. This indicates that 
Algorithm~\ref{alg:CA2} might potentially yield asymptotically optimal covering 
arrays. But the current approximation the size of $\mathcal{A}_t$, the set of 
$N \times t$ arrays with balanced columns which are not coverings, introduces 
substantial overcounting even in the easiest case when strength $t=2$. To 
see this in a different way, consider the examples of upper bounds on $d(2,v)$ 
given in Table~\ref{table: t=2 comparison}. We can see the improvements on the 
upper bounds on $d(2,v)$ obtained in Theorem~\ref{thm: d_t_v for any t} and 
Corollary~\ref{cor: d_2_v} compared to Theorem~\ref{thm: old LLL}. The fourth 
row of Table~\ref{table: t=2 comparison} corresponds to a bound obtained by the 
following simple construction. Let $\mathcal{V}$ be a collection of all 
$2$-subsets of an alphabet set $V$ of size $v$. Then a $CA(2,k,v)$ on alphabet 
set $V$ can be constructed by juxtaposing ${v \choose 2}$ isomorphic copies of 
a $CA(2,k,2)$ on alphabet set $V'$ for every $V'\in \mathcal{V}$. Since 
$d(2,2)=1$, we get  $d(2,v) \leq \frac{v(v-1)}{2} < v(v-1)\ln(2)$, giving 
improvement 
to the general bound on $d(2,v)$ obtained by Corollary~\ref{cor: d_2_v}. More 
advanced direct constructions of covering arrays of strength $t=2$, especially 
when $v$ is a prime power, provide covering arrays which yield even smaller  
bounds on $d(2,v)$ which are still quadratic in $v$ (for example, 
see~\cite{mixedColbourn}). The fifth row of Table~\ref{table: t=2 comparison} 
gives the slope of least square regression line for the set of pairs $(\log_2 
k, N)$ such that $N$ is the smallest size for which a $CA(N;2,k,v)$ is 
currently known  (as given in tables in~\cite{ca_tables_colbourn}). We 
can see that these 
values are still far 
away from the optimal asymptotic size given in the last row of 
Table~\ref{table: 
t=2 comparison}, with the exception of $v=2$.

\begin{table}[!h]
\centering
\begin{tabular}{c | rrrrrrrrr}\hline
$d(2,v)$ \textbackslash $v$ & 2 & 3 & 4 & 5 & 6 & 7 & 8& 9 & 10 \\ \hline \hline
Theorem~\ref{thm: old LLL} & 2.41 & 5.89 & 10.74 & 16.98 & 24.61 & 33.62 & 
44.01 & 55.80 & 68.97 \\
Theorem~\ref{thm: d_t_v for any t} & 2.0 & 5.13& 9.64 
& 15.53 & 22.81 & 31.48 & 41.53 & 52.96 & 65.79 \\
Corollary~\ref{cor: d_2_v} & 1  & 3.97  & 8.16 & 13.72 & 20.65  & 28.98 
& 38.68  &  49.78 & 62.25 \\ \vspace{6pt}
${v \choose 2}$ & 1  & 3  & 6  & 
10  & 15  & 21  & 28  & 
36 & 45 \\ \hline
slope of regression & 1.02 & 2.84 & 5.15 & 7.935 & 11.83 & 15.49 & 
19.55  & 21.99 & 25.83 \\
Theorem~\ref{thm: gargano d_2_v} & 1 & 1.5 & 
2  & 2.5  & 3  & 3.5  & 
4  & 4.5 & 5 \\\hline
\end{tabular}
\caption{Comparison of upper bounds on $d(2,v)$. }
\label{table: t=2 comparison}
\end{table}

We have seen  that Theorem~\ref{thm: d_t_v for any t} provides an 
improvement on the upper bound for $d(t,v)$ compared to the 
current best known result stated in Theorem~\ref{thm: old LLL} for any value 
of $t$. However, the improvement 
obtained is comparatively small as $t$ increases (for example, see 
Table~\ref{table: t=6 comparison}). On the other hand, the 
 upper bounds obtained here predict the existence 
of covering arrays with smaller size that what is currently known. Indeed, 
Algorithm~\ref{alg:CA2} terminates and outputs a proper covering array 
when~(\ref{eq: inequality for lemma 8}) is satisfied. That means that for a 
given $t$, $k$ and $v$, if $m$ is such that the value in~(\ref{eq: multivar 
ineq bound}) is smaller than 1, a $CA(vm; t,k,v)$ exits.  Figure~\ref{fig: t=6 
comparison} plots the current best known sizes of covering arrays with $t=6$, 
and $v=2$ or $v=7$ given in~\cite{ca_tables_colbourn} against the sizes of 
covering arrays for which~(\ref{eq: multivar ineq bound}) is smaller than 1. We 
can see that for small values of $k$, the current, predominately computational 
results, are producing covering arrays of smaller size. However, for large 
values of $k$ we are predicting the existence of covering arrays with much 
smaller number of rows.   

\begin{table}[!h]
\centering
\begin{tabular}{c | rrrrrrrr}\hline
$d(6,v)$ \textbackslash v & 2 & 3 & 4 & 5 & 6 & 7  \\ \hline \hline
Theorem~\ref{thm: old LLL} & 220.07 & 2524.79 & 14193.92 &
54150.39 & 161695.64 & 407738.63  \\
Theorem~\ref{thm: d_t_v for any t} & 218.32 & 2521.32 & 
14188.72 &
54143.46 & 161686.98 & 407728.23  \\
Table~\ref{table: constants} & 209.13 
 & 2503.83 & 14162.67 & 54108.77 & 161643.64 & 407676.24  \\ \hline
\end{tabular}
\caption{Comparison of upper bounds on $d(6,v)$. }
\label{table: t=6 comparison}
\end{table}

\begin{figure}[!h] 
\centering
\begin{subfigure}{.5\textwidth}
  \centering
  
\includegraphics[width=\linewidth]{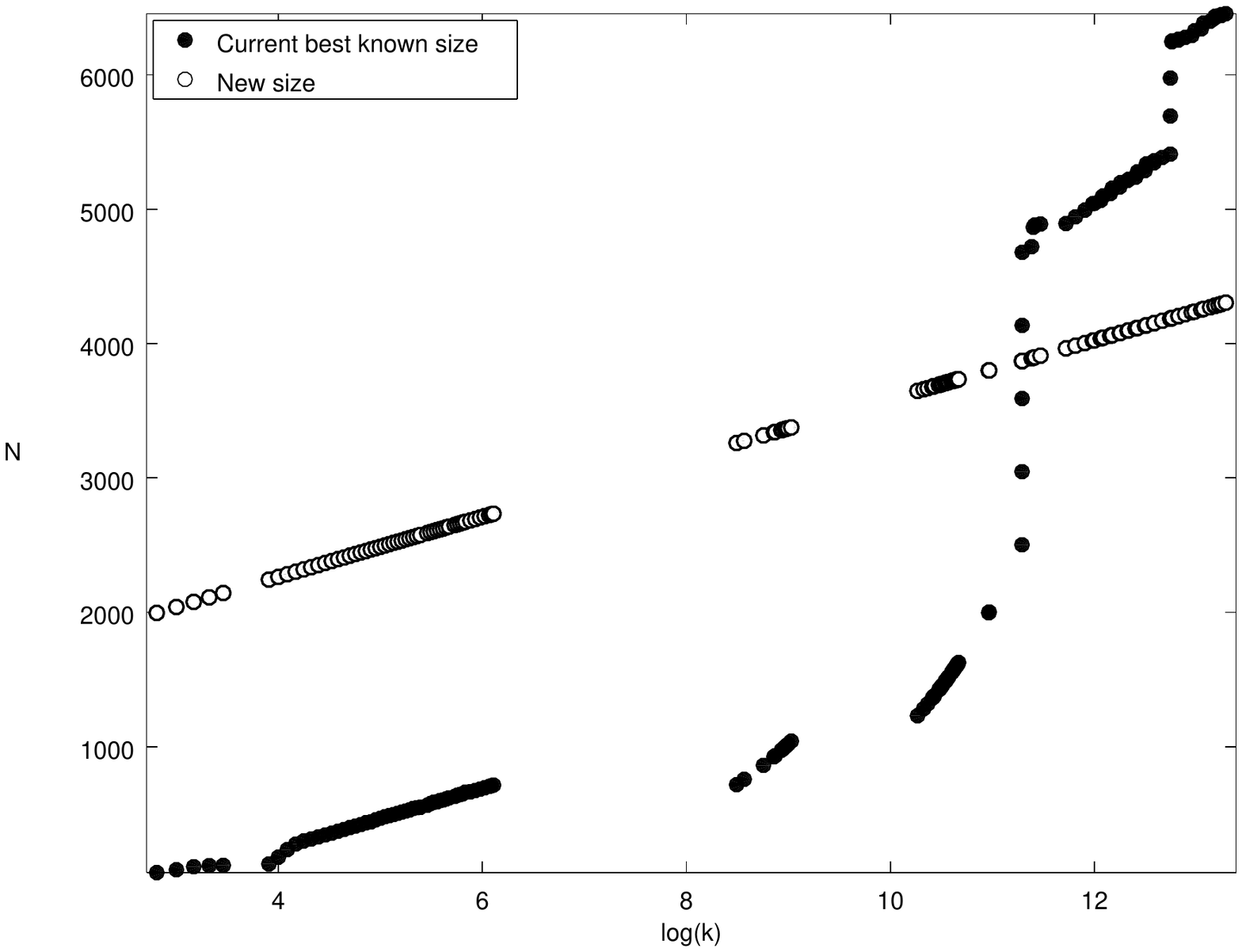}
  \caption{$CA$s with $t=6$ and $v=2$.}
  \label{fig:sub1}
\end{subfigure}%
\begin{subfigure}{.5\textwidth}
  \centering
  
\includegraphics[width=\linewidth]{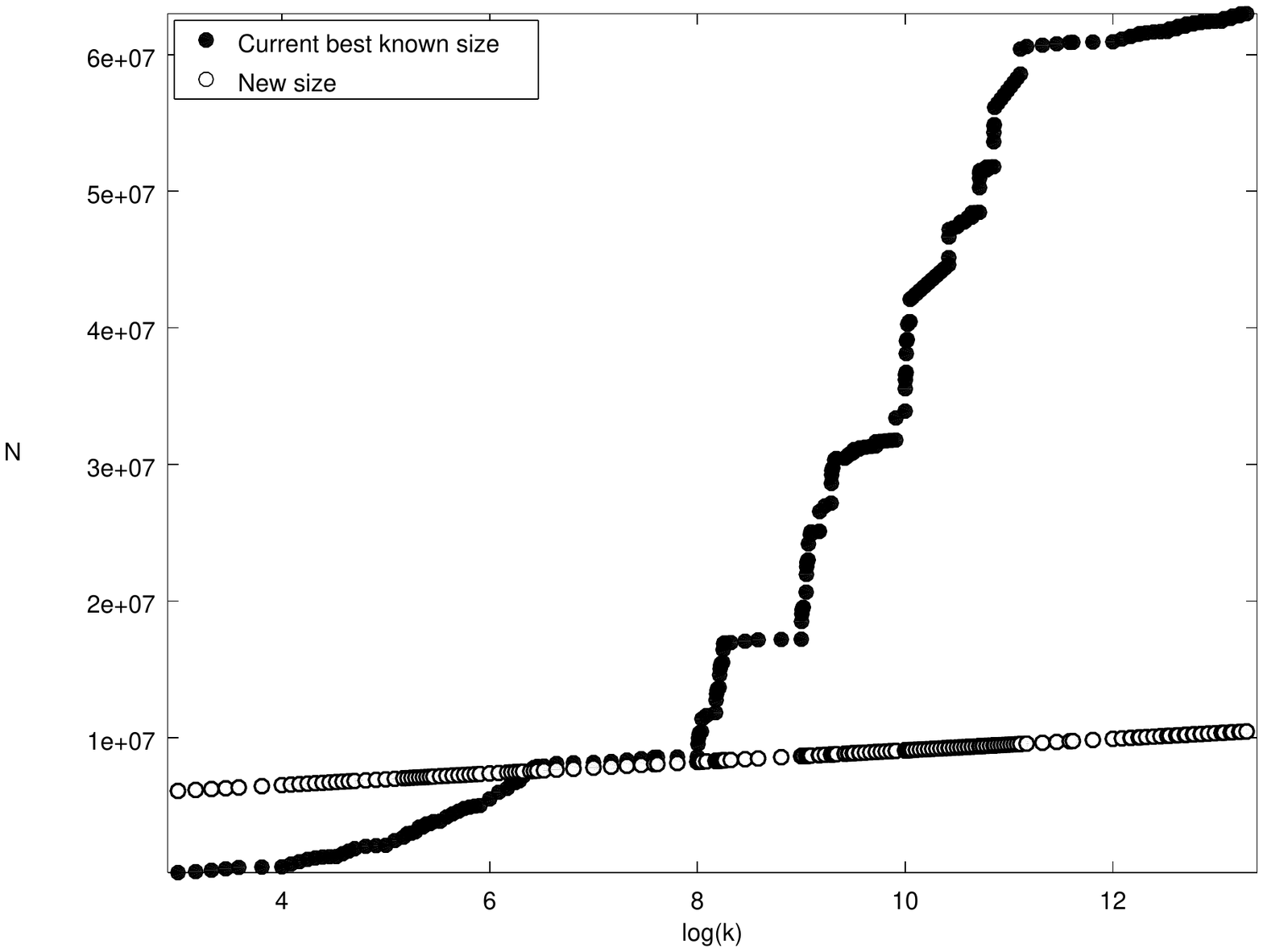}
  \caption{$CA$s with $t=6$ and $v=7$.}
  \label{fig:sub2}
\end{subfigure}
\caption{Comparison of sizes of covering arrays which can be constructed by 
Algorithm~\ref{alg:CA2} with currently best known sizes.}
\label{fig: t=6 comparison}
\end{figure}

\begin{section}{Conclusion}
 
 Determining the optimal size of a covering array for a given triple $(t,k,v)$ 
and constructing optimal covering arrays have been two central questions in 
this area of research. The interest in these two questions stems from the fact 
that covering arrays are natural models for interaction test suites and hence 
they are extensively used in the blooming software testing industry. However, 
these two questions have proven to be a great challenge for both combinatorial 
and 
computer science research communities. 

In this paper we tackled the problem of determining the upper bounds on the 
asymptotic size of covering arrays using an algorithmic version of the local 
lemma. 
We determined a new general bound on $d(t,v)$ (see Theorem~\ref{thm: d_t_v for 
any t}) and we gave a tighter bound in Lemma~\ref{lemma: multivariable upper 
bound on d} which depends on further numerical computation. 

However, though we are improving the existing upper 
bounds on the asymptotic size of covering arrays for strength $t \geq 3$, in 
the simplest case when $t=2$ (and the over-counting is the least), the 
bounds we are obtaining are far from the optimal predicted by Theorem~\ref{thm: 
gargano d_2_v}. The main challenge in improving 
these bounds is finding a better way to count the number of balanced arrays on 
$t$ columns which are not $t$-coverings. A new view to this problem may lead to 
better encrypting of information in the `back-track' lines in the 
algorithm. Indeed, in the 
case when $t=2$ and $v=2$, we are able to count these arrays exactly and as a 
result this general algorithm produces covering arrays whose size is 
asymptotically optimal. 

\end{section}

\begin{section}{Acknowledgments}
 We sincerely thank Marni Mishna and Steve Melczer for many insightful 
discussions on enumeration techniques.   
\end{section}

\begin{bibdiv}
\begin{biblist}
\bib{ca_tables_colbourn}{misc}{
      author={Colbourn, Charles},
       title={Covering array tables for $t=2,3,4,5,6$},
        date={2014},
         url={http://www.public.asu.edu/~ccolbou/src/tabby/catable.html},
        note={Accessed on Aug 16, 2014.},
}

\bib{colbourn_ca_survey}{article}{
      author={Colbourn, Charles~J},
       title={Combinatorial aspects of covering arrays},
        date={2004},
     journal={Le Matematiche},
      volume={59},
      number={1-2},
       pages={125\ndash 172},
}

\bib{mixedColbourn}{article}{
      author={Colbourn, Charles~J},
      author={Martirosyan, Sosina~S},
      author={Mullen, Gary~L},
      author={Shasha, Dennis},
      author={Sherwood, George~B},
      author={Yucas, Joseph~L},
       title={Products of mixed covering arrays of strength two},
        date={2006},
     journal={Journal of Combinatorial Designs},
      volume={14},
      number={2},
       pages={124\ndash 138},
}

\bib{korner_information_2011}{book}{
      author={Csisz\'{a}r, Imre},
      author={K\"{o}rner, J\'{a}nos},
       title={Information theory: Coding theorems for discrete memoryless
  systems},
     edition={2},
   publisher={Cambridge University Press},
        date={2011},
}

\bib{www_pairwise}{misc}{
      author={Czerwonka, Jacek},
       title={Pairwise testing: Combinatorial test case generation},
        date={2014},
         url={www.pairwise.org},
        note={Accessed on Oct 23, 2014.},
}

\bib{vida_nonrepetitive_colouring}{article}{
      author={Dujmovi\'{c}, Vida},
      author={Joret, Gwena\"{e}l},
      author={Kozik, Jakub},
      author={Wood, David~R.},
       title={Nonrepetitive colouring via entropy compression},
        date={accepted in 2013},
     journal={Combinatorica},
}

\bib{gargano93}{article}{
      author={Gargano, L.},
      author={K\"{o}rner, J.},
      author={Vaccaro, U.},
       title={Sperner capacities},
        date={1993},
     journal={Graphs and Combinatorics},
      volume={9},
      number={1},
       pages={31\ndash 46},
}

\bib{godbole_bounds_1996}{article}{
      author={Godbole, Anant~P},
      author={Skipper, Daphne~E},
      author={Sunley, Rachel~A},
       title={$t$-covering arrays: upper bounds and poisson approximations},
        date={1996},
     journal={Combinatorics, Probability and Computing},
      volume={5},
      number={2},
       pages={105{\textendash}117},
}

\bib{entropy_compression_2014}{article}{
      author={Gon\c{c}alves, Daniel},
      author={Montassier, Micka\"{e}l},
      author={Pinlou, Alexandre},
       title={Entropy compression method applied to graph colorings},
        date={2014-06-17},
      eprint={arXiv:1406.4380},
         url={http://arxiv.org/abs/1406.4380},
        note={Accessed on July 13, 2014.},
}

\bib{hartman_ca_survey}{article}{
      author={Hartman, Alan},
      author={Raskin, Leonid},
       title={Problems and algorithms for covering arrays},
        date={2004},
     journal={Discrete Mathematics},
      volume={284},
      number={1-3},
       pages={149\ndash 156},
}

\bib{katona}{article}{
      author={Katona, G. O.~H.},
       title={Two applications (for search theory and truth functions) of
  sperner type theorems},
        date={1973},
     journal={Periodica Mathematica Hungarica. Journal of the J\'{a}nos Bolyai
  Mathematical Society},
      volume={3},
       pages={19\ndash 26},
}

\bib{kleitman}{article}{
      author={Kleitman, Daniel~J.},
      author={Spencer, Joel},
       title={Families of $k$-independent sets},
        date={1973},
     journal={Discrete Mathematics},
      volume={6},
       pages={255\ndash 262},
}

\bib{kuhn_combinatorial_testing}{book}{
      author={Kuhn, D.~Richard},
      author={Kacker, Raghu~N.},
      author={Lei, Yu},
       title={Introduction to combinatorial testing},
     edition={1},
   publisher={Chapman \& Hall/{CRC}},
        date={2013},
}

\bib{binary_cas_survey}{article}{
      author={Lawrence, Jim},
      author={Kacker, Raghu~N.},
      author={Lei, Yu},
      author={Kuhn, D.~Richard},
      author={Forbes, Michael},
       title={A survey of binary covering arrays},
        date={2011},
     journal={Electron. J. Combin.},
      volume={18},
      number={1},
       pages={P84},
}

\bib{survey_combinatorial_testing}{article}{
      author={Nie, Changhai},
      author={Leung, Hareton},
       title={A survey of combinatorial testing.},
        date={2011},
     journal={{ACM} Comput. Surv.},
      volume={43},
       pages={1\ndash 29},
}

\bib{binomial_coeff_bounds}{article}{
      author={St\u{a}nic\u{a}, Pantelimon},
       title={Good lower and upper bounds on binomial coefficients},
        date={2001},
     journal={JIPAM. J. Inequal. Pure Appl. Math.},
      volume={2},
      number={3},
       pages={Paper No. 30, 5 p., electronic only},
}

\bib{ca_asymptotics_godbole}{article}{
      author={Yuan, Ruyue},
      author={Koch, Zoe},
      author={Godbole, Anant},
       title={Covering array bounds using analytical techniques},
        date={2014-05-12},
      eprint={arXiv: 1405.2844},
         url={http://arxiv.org/abs/1405.2844},
        note={Accessed on Oct 23, 2014.},
}

\end{biblist}
\end{bibdiv}

\end{document}